\newtheorem{thm}{Theorem}[section]
\newtheorem{cor}[thm]{Corollary}
\newcommand{\Z}{\mathbb{Z}}
\newcommand{\N}{\mathbb{N}}
\newcommand{\R}{\mathbb{R}}
\newcommand{\Q}{\mathbb{Q}}
\newcommand{\F}{\mathbb{F}}
\DeclareMathOperator{\dom}{dom}
\DeclareMathOperator{\ran}{range}
\newcommand{\rest}{\mathbin{\upharpoonright}}
\newcommand{\To}{\longrightarrow}
\newcommand{\st}{\; | \;}
\newcommand{\set}[2]{\left\{#1\st #2 \right\}}
\newcommand{\seq}[2]{\langle #1 \st #2 \rangle}
\renewcommand{\P}{\mathbb{P}}
\renewcommand{\a}{\textup{\textbf{a}}}
\renewcommand{\b}{\textup{\textbf{b}}}
\newcommand{\g}{\overline{g}}
\renewcommand{\d}{\textup{\textbf{d}}}
\begin{document}

\title{Infinite Latin Squares: Neighbor Balance and Orthogonality}

\author{Anthony~B.~Evans$^{1}$, Gage~N.~Martin$^{2}$, Kaethe Minden$^3$, and M.~A.~Ollis$^{3,}$\footnote{Corresponding author, email address: \texttt{matt@marlboro.edu.}}   \\
              \\
                {\it ${}^1$Department of Mathematics and Statistics, Wright State University,} \\    
              {\it Dayton, Ohio 45435, USA.} 
              \\
              \\
              {\it ${}^2$Department of Mathematics, Boston College,} \\
               {\it 140 Commonwealth Ave, Chestnut Hill,  } \\
              {\it  Massachusetts 02467-3804, USA.}
              \\
              \\
              {\it ${}^3$Marlboro College, P.O.~Box A, Marlboro,} \\    
              {\it Vermont 05344, USA.} }

\maketitle

\begin{abstract}
Regarding neighbor balance, we consider natural generalizations of $D$-complete Latin squares and Vatican squares from the finite to the infinite.   We show that if~$G$ is an infinite abelian group with~$|G|$-many square elements, then it is possible to permute the rows and columns of the Cayley table to create an infinite Vatican square. We also construct a Vatican square of any given infinite order that is not obtainable by permuting the rows and columns of a Cayley table.  Regarding orthogonality, we show that every infinite group~$G$ has a set of~$|G|$ mutually orthogonal orthomorphisms and hence there is a set of~$|G|$ mutually orthogonal Latin squares based on~$G$. We show that an infinite group~$G$ with~$|G|$-many square elements has a strong complete mapping; and, with some possible exceptions, infinite abelian groups have a strong complete mapping.

\vspace{3mm}
\noindent
{\bf Keywords:} complete Latin square; complete mapping; directed terrace; infinite design; infinite Latin square; mutually orthogonal Latin squares; orthomorphism; R-sequencing; sequencing; strong complete mapping; Vatican square.
\end{abstract}

\section{Introduction}\label{sec:intro}

A finite Latin square is {\em row complete} or {\em Roman} if  any two distinct symbols appear in adjacent cells within rows once in each order.  If the transpose of a Latin square is row complete then the square is {\em column complete}; a square that is row complete and column complete is {\em complete}.  Finite row complete squares exist for all composite orders~\cite{Higham98} and finite complete squares are known to exist for all even orders~\cite{Gordon61} and many odd composite orders at which a nonabelian group exists; see, for example, \cite{Ollis14}.

Vatican and $D$-complete  squares strengthen this notion of completeness. 
A Latin square is is {\em row D-complete} if any two distinct symbols appear in cells that are distance~$d$ apart in rows at most once in each order for each~$d \leq D$. {\em Column D-completeness} is defined analogously and a square that is both row and column D-complete is {\em D-complete}.  The 1-completeness property is the same as completeness.

An $(n-1)$-complete square of order~$n$ is called {\em Vatican}; that is, Vatican squares have the pair-occurrence restriction at every possible distance.

Vatican squares are known to exist for all orders that are one less than a prime.  In addition to this, 2-complete squares are known to exist at orders~$2p$ where~$p$ is a prime congruent to 5, 7 or 19 modulo~24, orders~$2m$ where~$5 \leq m \leq 25$, and order~21 \cite{TuscanCRC,OllisTFSG}.

In this paper we extend these notions to the infinite and prove various existence results.  As in \cite{CW02}, we use Zermelo-Fraenkel set theory with the axiom of choice. In order to work with infinite sets, we use the set-theoretic machinery of ordinals and transfinite induction. An ordinal is an isomorphism type of well-ordered sets. The finite ordinals correspond to the natural numbers (or rather
the unique well-ordered sets with 0 or 1 or 2 etc. as elements), but there are also infinite ordinals.
The first infinite ordinal, denoted $\omega$, corresponds to the well-ordered set of natural numbers.
The ordinal corresponding to the well-order of the natural numbers with an added maximal element is
denoted $\omega+1$, and we can keep going after this to obtain $\omega+2, \omega+3$, and so on, reaching the limit $\omega+\omega$ (aka $\omega\cdot 2$), and beyond. 

As ordinals represent canonical well-orderings, they each support a notion of induction similar to the usual one on the natural numbers. Fixing an ordinal $\lambda$, transfinite induction up to $\lambda$ allows us to prove that a property $P$ holds for all ordinals below $\lambda$ by showing, from the hypothesis that $P$ holds for all ordinals below some $\alpha<\lambda$, that $P$ holds for $\alpha$ (in this formulation
the induction principle is analogous to strong induction on the natural numbers, and in fact reduces to it if we set $\lambda=\omega$).

It follows from the axiom of choice that for any set $X$, there is a bijection between $X$ and some ordinal. In the usual set-theoretic practice, the cardinality of $X$ is defined as the least such ordinal. 

We require a definition of an infinite Latin square that allows us to talk about spatial relationships.  This is accomplished by using a subset of an ordered field to index the rows and columns.  When that field is~$\Q$ or~$\R$, the infinite Latin squares we obtain are naturally embedded in~$\R^2$.

Let~$\F$ be an ordered field and let~$I \subseteq \F$.   For each~$d \in \F^+$ let~$I_{(d)} = \set{ i \in I }{  i+d \in I }$.  If $|I| = |\F|$ and for each~$d$ we have either $|I_{(d)}| = |\F|$ or~$|I_{(d)}| = 0$, then~$I$ is an {\em index set}.   For our purposes, we may assume that~$I_{(1)} \neq \emptyset$ without loss of generality.

Given an index set~$I$, a {\em Latin square} on~$I$ with symbol set~$X$ is a function $L: I \times I \rightarrow X$ such that for each~$i \in I$ the restriction of $L$ to $I \times \{i\}$ is a bijection with~$X$, as is the restriction to~$\{i\} \times I$.   In other words, each symbol appears once in each ``row" and once in each ``column."  

This definition is  compatible with the definition for Latin squares of arbitrary cardinality of Hilton and Wojciechowski~\cite{HW05}.  In the countable case with~$I = \N \subseteq \Q$ or $I = \Z \subseteq \Q$ we get the ``quarter-plane Latin squares" and ``full-plane Latin squares" respectively of Caulfield~\cite{Caulfield96}.

The definition of completeness for infinite squares is obtained by identifying the ideas of adjacency and being at distance~1.  This is perfectly natural when~$I \in \{ \N ,\Z\}$ and again matches the definition of Caulfield~\cite{Caulfield96}.   It does not seem to capture a property of particular combinatorial interest otherwise, but when we move to generalizing Vatican squares we get the very natural notion of pairs appearing once at all distances.  Indeed, the definition of an infinite Vatican square is arguably more natural than the finite version as it allows every pair to appear exactly once at every distance rather than merely at most once.

Formally, an infinite Latin square on an index set~$I$  is {\em row complete} or {\em Roman} if each pair of distinct symbols appears exactly once in each order at distance~1 in rows.  The square is {\em complete} if the corresponding property also holds in columns.   An infinite Latin square with indexing set~$I$ is {\em row $D$-complete} if each pair of symbols appear exactly once in each order at distance~$d$ in rows for each~$d$ such that~$I_{(d)} \neq \emptyset$ and $0 < d \leq D$. The square is~{\em $D$-complete} if the corresponding property also holds in columns. Further, the square is {\em Vatican} if for each~$d$ with~$I_{(d)} \neq \emptyset$ we have that each pair of distinct symbols appears at distance~$d$ exactly once in each order in rows and once in each order in columns.

Our first method for constructing squares uses Cayley tables of groups.   In the finite case all known constructions for complete squares--and hence $D$-complete and Vatican squares---use the notion of ``sequenceability" of a group and generalisations of it.  In the next section we show that similar notions are sufficient to construct infinite $D$-complete and Vatican  squares.  

Say that an infinite group~$G$ is~{\em squareful} if the set~$\{ g^2 : g \in G\}$ has the same cardinality as~$G$.  If~$G$ is an abelian squareful group and~$I$ is an index set with~$|I| = |G|$, then we can construct an infinite Vatican square on~$I$ using the Cayley table of~$G$. 

In Section~\ref{sec:notgp} we explore non-group-based methods.  We show that there is a Vatican square of each infinite order that cannot be produced by permuting the rows and columns of a Cayley table.  Whether a finite Vatican square with this property exists is an open question.  We also show that there is a Latin square of each infinite order such that no permutation of its rows and columns gives a Vatican (or even Roman) square.

As infinite sets can be bijective with proper subsets of themselves, we can define a variation on Vatican squares that only makes sense for infinite orders.
Say that an infinite Latin square on index set~$I$ is {\em semi-Vatican} if for each~$d$ with~$I_{(d)} \neq \emptyset$ we have that each pair of distinct symbols appears at distance~$d$ exactly once in rows and once in columns.  Although this does not have a finite analogue, all known constructions for finite Vatican squares of even order~$n$ have $n/2$ rows that together form a ``row semi-Vatican rectangle" and the remaining $n/2$ rows are the reverse of these ones.

All of the results for Vatican squares transfer to the semi-Vatican case in Section~\ref{sec:semivat} with little modification.  In addition to this, looking at the semi-Vatican case allows for an explicit construction of one in the case~$I = \R$ using only the tools of undergraduate Calculus.

Moving to orthogonality, two finite Latin squares on a symbol set~$X$ are orthogonal if for  each pair~$(x_1, x_2) \in X \times X$ there is exactly one position such that~$x_1$ is in that position in the first square and~$x_2$ is in that position in the second pair.  This definition carries over without modification to the infinite case (the countable version of which is given in \cite[p.~116]{DK15}).

In Section~\ref{sec:orth} we see that the methods from Section~\ref{sec:cayley} may be quickly adapted to produce sets of~$\kappa$ mutually orthogonal Latin squares of order~$\kappa$ for all infinite orders~$\kappa$ via Cayley tables of abelian squareful groups.  This is analogous to the finite construction of ``orthomorphisms" via ``R-sequencings".   We also construct orthomorphisms directly, finding $\kappa$ mutually orthogonal orthomorphisms for each group of infinite order~$\kappa$ and show that many infinite groups have strong complete mappings.

\section{Vatican squares from groups}\label{sec:cayley}

Let~$I$ be an index set in an ordered field~$\F$.  Let~$G$ be a group of order~$|I|$.  For a bijection~${\bf a}: I \rightarrow G$ define a function~${\bf a}_{(d)}: I_{(d)} \rightarrow G \setminus \{ e \}$ for each~$d \in \F^+$ with~$I_{(d)} \neq \emptyset$ by
$${\bf a}_{(d)}(i) = {\bf a}(i)^{-1}{\bf a}(i+d).$$ Such a function is called a $T_d$-sequencing for $\a$, if it is a bijection.
If there is a~$D$ such that for all~$d < D$ with~$I_{(d)} \neq \emptyset$ we have that each ${\bf a}_{(d)}$ is a bijection, then~${\bf a}$ is a {\em directed $T_D$-terrace for~$G$}.  If ${\bf a}_{(d)}$ is a bijection for all~$d$  with~$I_{(d)} \neq \emptyset$ then ${\bf a}$ is  a {\em directed $T_{\infty}$-terrace} for~$G$.

These definitions closely mimic the versions for finite groups~\cite{Anderson90}.   They can be used to produce Latin squares with neighbor balance properties in much the same way.  Theorem~\ref{th:terrace2square} generalizes Gordon's result~\cite{Gordon61} for finite complete squares and Anderson's~\cite{Anderson90} and Etzion, Golomb and Taylor's results~\cite{EGT89} for finite Vatican squares to the infinite.  

For any bijection~${\bf a}: I \rightarrow G$ define a square~$L({\bf a}) = (\ell_{ij})$ by $\ell_{ij} = a(i)^{-1}a(j)$.   As~${\bf a}$ is a bijection, each row and column contains each symbol exactly once and so~$L$ is a Latin square.  Call a Latin square created in this way {\em based on~$G$}, or simply {\em group-based}.

\begin{thm}\label{th:terrace2square}
Let~$G$ be a group of infinite order~$\kappa$.  If~$G$ has a directed $T_{D}$-terrace for an index set~$I$ then there is a $D$-complete Latin square of order~$|G|$ on~$I$.  Further, if~$G$ has a directed $T_{\infty}$-terrace then there is a Vatican square of order~$|G|$.
\end{thm}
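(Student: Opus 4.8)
The plan is to verify directly that the square $L(\a) = (\ell_{ij})$ with $\ell_{ij} = \a(i)^{-1}\a(j)$ has the required pair-occurrence properties, translating the condition that each $\a_{(d)}$ is a bijection into the combinatorial statement about pairs at distance $d$. First I would fix a distance $d$ with $I_{(d)} \neq \emptyset$ and $0 < d \leq D$, and consider pairs of symbols appearing at distance $d$ within rows. In row $i$, the symbols in columns $j$ and $j+d$ are $\a(i)^{-1}\a(j)$ and $\a(i)^{-1}\a(j+d)$; the ordered pair of symbols seen is therefore $\big(\a(i)^{-1}\a(j),\, \a(i)^{-1}\a(j+d)\big)$. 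The key observation is that this pair is determined by the ``left element'' $x = \a(i)^{-1}\a(j)$ together with the group element $x^{-1}y = \a(j)^{-1}\a(j+d) = \a_{(d)}(j)$, where $y$ is the right element. So the pair at distance $d$ in rows encodes the value $\a_{(d)}(j)$.

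\medskip

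The main step is to show that each ordered pair of distinct symbols $(x,y)$ with $x \neq y$ occurs exactly once at distance $d$ in rows. Given such a pair, set $g = x^{-1}y \neq e$. Because $\a_{(d)} : I_{(d)} \to G \setminus \{e\}$ is a bijection, there is a unique column index $j \in I_{(d)}$ with $\a_{(d)}(j) = g$, i.e. $\a(j)^{-1}\a(j+d) = g$. Having fixed this $j$, I then need the row index $i$: the condition $\a(i)^{-1}\a(j) = x$ forces $\a(i) = \a(j)x^{-1}$, and since $\a$ is a bijection onto $G$ there is a unique such $i \in I$. One checks that this $(i,j)$ also produces the correct right element, since $\a(i)^{-1}\a(j+d) = x \cdot \a(j)^{-1}\a(j+d) = xg = y$. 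Conversely any occurrence of $(x,y)$ at distance $d$ yields a column index solving $\a_{(d)}(j) = g$ and a row index solving $\a(i) = \a(j)x^{-1}$, so uniqueness of $j$ and $i$ gives uniqueness of the occurrence. This establishes row $D$-completeness.

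\medskip

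For the columns I would run the symmetric argument. The symbol in row $i$ and row $i+d$ of a fixed column $j$ are $\a(i)^{-1}\a(j)$ and $\a(i+d)^{-1}\a(j)$; writing the first as $x$ the pair is governed by $\a(i)\a(i+d)^{-1}$, and surjectivity/injectivity of $\a$ together with the bijectivity of $\a_{(d)}$ again pins down a unique $(i,j)$ for each distinct ordered pair. (If the group is nonabelian one must be a little careful about left versus right multiplication, but the argument is structurally identical and uses the same hypothesis on $\a_{(d)}$.) Finally, for the $T_\infty$ statement I simply note that the above reasoning applies verbatim to \emph{every} $d$ with $I_{(d)} \neq \emptyset$, so the bijectivity of all the $\a_{(d)}$ yields the once-in-each-order-at-every-distance condition defining a Vatican square.

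\medskip

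I expect the main obstacle to be bookkeeping rather than any deep difficulty: the content is entirely encapsulated in the bijectivity of $\a_{(d)}$, and the proof is a reindexing argument. The one genuine subtlety worth checking is that the index arithmetic stays inside $I$ — specifically that the unique row index $i$ solving $\a(i) = \a(j)x^{-1}$ lies in $I$ (guaranteed since $\a$ is a bijection $I \to G$) and that the relevant indices $j, j+d$ both lie in $I$ (guaranteed by $j \in I_{(d)}$). The definition of index set, ensuring $I_{(d)}$ is either empty or of full cardinality, is what makes the distance-$d$ balance condition well-posed, so I would make explicit that we only assert balance for those $d$ with $I_{(d)} \neq \emptyset$.
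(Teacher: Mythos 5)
Your proposal is correct and follows essentially the same route as the paper: use bijectivity of $\a_{(d)}$ to locate the unique column index $j$ with $\a_{(d)}(j)=x^{-1}y$, then bijectivity of $\a$ to locate the unique row, and run the symmetric argument in columns (where, as the paper makes explicit, the governing quantity is $xy^{-1}=\a(i)^{-1}\a(i+d)=\a_{(d)}(i)$ rather than the $\a(i)\a(i+d)^{-1}$ you wrote, though your caveat about left/right multiplication covers this). No substantive difference from the paper's proof.
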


\begin{proof}
Let~${\bf a}$ be a directed~$T_D$-terrace for~$G$ on~$I$ and consider $L({\bf a})$.

Take~$x$ and~$y$ to be distinct elements of~$G$.  As~${\bf a}_{(d)}$ is a bijection, there is a unique~$j$ with ${\bf a}(j)^{-1}{\bf a}(j+d) = x^{-1}y$ and a unique~$i$ with ${\bf a}(i)^{-1}{\bf a}(j)=x$.  We therefore have that~$x$ appears in row~$i$ and column~$j$ of~$L({\bf a})$ and that~$y$ appears in row~$i$ and column~$j+d$ of~$L({\bf a})$ and that~$x$ and~$y$ do not appear anywhere else with~$y$ exactly distance~$d$  to the right of~$x$.

There is also a unique~$i$ with $xy^{-1} = {\bf a}(i)^{-1}{\bf a}(i+d)$ and then a unique~$j$ with ${\bf a}(i)^{-1}{\bf a}(j)=x$.  This identifies a unique place where~$y$ appears at exactly distance~$d$ above~$x$ in the square.  Therefore $L({\bf a})$ is a $D$-complete square on~$I$.

If we replace~${\bf a}$ with a directed $T_{\infty}$-terrace in the above argument we see that~$L({\bf a})$ is a Vatican square on~$I$.
\end{proof}

The 1-complete case with~$I \in \{ \N, \Z \}$ of Theorem~\ref{th:terrace2square} is equivalent to results of Caulfield~\cite{Caulfield96}.

We wish to know which infinite groups have directed $T_D$- and~$T_{\infty}$-terraces.  

We use transfinite induction to build such terraces, and later to build the squares more generally. 
The construction will proceed by building better and better approximations to the object in transfinitely many steps, using each step to satisfy a requirement. Our approximations will be coherent and, at the end of the construction, we will end up with an object that satisfies all the requirements for the object we are trying to build.

We organize these constructions as follows. In each case we consider the partially ordered set $\P=\langle \P, \leq\rangle$, consisting of the approximations under consideration. Following established set-theoretic practice, we call elements of $\P$ conditions, and they are ordered so that $p\leq q$ if $p$ is a better approximation than $q$ (we say that $p$ is a stronger condition than $q$, or that it extends $q$).  Our inductive construction thus amounts to building a descending chain of conditions, at each step meeting a requirement. A major part of our proofs is showing that, given a requirement, any condition can be extended to meet it. This is usually phrased in terms of the set of conditions which satisfy the requirement being dense (in the sense of the order topology, i.e., for a set to be dense, any condition has an extension in this set). The key is is to judiciously pick out the dense sets in the poset which enable the object we are trying to build to clearly satisfy our desired properties.

\begin{thm}\label{th:T_infty}
Let~$G$ be an abelian squareful group. Then~$G$ has a directed $T_{\infty}$-terrace.
\end{thm}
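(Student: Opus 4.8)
The plan is to build the bijection $\a\colon I\to G$ by transfinite recursion, following the forcing-style setup sketched just above the statement. I take the poset $\P$ to consist of all injective partial functions $p\colon I\rightharpoonup G$ with $|\dom(p)|<\kappa$ that are \emph{difference-injective}: for every $d$ and any two distinct $i,i'\in\dom(p)$ with $i+d,i'+d\in\dom(p)$ one has $p(i)^{-1}p(i+d)\neq p(i')^{-1}p(i'+d)$. Order $\P$ by reverse inclusion, so $q\le p$ means $q\supseteq p$. I would list three families of requirements: for each $i\in I$ the set $E_i$ of conditions with $i\in\dom(p)$; for each $g\in G$ the set $D_g$ of conditions with $g\in\ran(p)$; and for each relevant $d$ and each $h\in G\setminus\{e\}$ the set $F_{d,h}$ of conditions in which some pair $i,i+d\in\dom(p)$ has $p(i)^{-1}p(i+d)=h$. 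There are only $\kappa$ requirements in all, so I would enumerate them in order type $\kappa$ and construct a descending chain $\langle p_\alpha : \alpha<\kappa\rangle$, meeting one requirement at each successor step and taking unions at limits. Since each successor step adds only finitely many points and every $\alpha<\kappa$ has $|\alpha|<\kappa$, each $p_\alpha$ has domain of size $<\kappa$ and the limit unions stay inside $\P$; this dispenses with any need for $\kappa$ to be regular.

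Granting the extension lemmas below, the verification that $\a=\bigcup_{\alpha<\kappa}p_\alpha$ is as desired is routine. Injectivity of $\a$ and of each $\a_{(d)}$ is maintained throughout, since difference-injectivity passes to unions of chains (any violation involves finitely many points and so already appears at some stage), and injectivity of $\a$ forces $\a(i)\neq\a(i+d)$, so each $\a_{(d)}$ avoids $e$. Meeting every $E_i$ makes $\a$ total on $I$, meeting every $D_g$ makes it onto $G$, and with $|I|=|G|=\kappa$ this yields a bijection; meeting every $F_{d,h}$ makes each $\a_{(d)}$ onto $G\setminus\{e\}$. Hence $\a$ is a directed $T_\infty$-terrace.

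The heart of the matter is the density of these sets: any $p\in\P$ extends into each. For $D_g$ I would keep the value $g$ fixed and choose a fresh position $i\notin\dom(p)$; only $<\kappa$-many positions are bad (those creating a repeated difference, or lying at the midpoint of a pair $i',i''$ with $p(i')p(i'')=g^2$), while $|I|=\kappa$, so a good $i$ exists. For $F_{d,h}$, if $h$ is not already a difference at distance $d$ I pick a fresh pair $i,i+d$ from the size-$\kappa$ set $I_{(d)}$ and then, exactly as for $E_i$, must choose the value $g=\a(i)$ (forcing $\a(i+d)=gh$) so as to avoid collisions. Writing out the new differences created by fixing $\a(i)=g$, each is linear in $g$, and a short calculation shows the constraints are of only two kinds: at most $|\dom(p)|<\kappa$ single-value constraints $g\neq c$ (from injectivity of $\a$ and from matching a pre-existing difference at the same distance), and, whenever $\dom(p)$ contains a symmetric pair $i-d',\,i+d'$, a constraint $g^2\neq \a(i-d')\a(i+d')$. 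There are at most $|\dom(p)|<\kappa$ of these ``square'' constraints.

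The key observation is that squarefulness is precisely what defeats the square constraints. The forbidden values for a single constraint $g^2\neq c$ form one fibre of the squaring map $g\mapsto g^2$ (a coset of $\{x\in G : x^2=e\}$); equivalently, the bad $g$ are exactly those whose square lies in a forbidden set of $<\kappa$-many elements. Since $G$ is squareful, the image of the squaring map has size $\kappa$, so forbidding $<\kappa$ of its values removes only $<\kappa$ of its $\kappa$-many fibres and leaves an allowed set of size $\kappa$; deleting the remaining $<\kappa$ single forbidden values still leaves a legal choice of $g$. I expect this to be the main obstacle and the crux of the whole proof: recognizing that repeated-difference constraints at a common distance split into harmless single-value constraints and ``square'' constraints, and that the hypothesis $|\{g^2:g\in G\}|=|G|$ is exactly the guarantee that the latter can always be dodged. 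Were $G$ not squareful, even a handful of square constraints could already exhaust $G$, and the construction would break down.
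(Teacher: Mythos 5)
Your proposal is correct and follows essentially the same route as the paper: the same poset of difference-injective partial functions, the same three families of dense sets ($i$ in the domain, $g$ in the range, $h$ realized as a difference at distance $d$), and the same crux that squarefulness defeats the constraints $g^2\neq c$ arising from symmetric pairs in the domain. The one point your ``short calculation'' glosses over (and the paper spells out) is that when meeting $F_{d,h}$ the two new differences anchored at $i$ and at $i+d$ at a common distance $d'$ could collide via a condition \emph{independent} of $g$, namely $h=\a_{(d)}(i+d')$; this is ruled out precisely by the standing assumption $h\notin\ran\a_{(d)}$, so the argument goes through.
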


\begin{proof}
Let $I$ be an index set in an ordered field $\F$, where $|I|=\kappa$ is the order of $G$. We build a directed $T_\infty$-terrace for $G$ by transfinite induction on $\kappa$. Consider the poset $\P$ consisting of partial directed $T_\infty$-terraces on $G$. These partial terraces are as in the definition of a directed $T_\infty$-terrace, except the functions $\a$ and its $T_d$-sequencings $\a_{(d)}$ for each $d \in \F^+$ with $I_{(d)} \neq \emptyset$ are only required to be injective partial functions from $I$ to $G$ with domains of cardinality less than $\kappa$. Here $\P$ should be partially ordered so that $\a \leq \b$ if and only if $\a$ extends $\b$ as a function, i.e.\ $\dom \b \subseteq \dom \a$ and $\a \rest \dom \b = \b$.

Below we identify the requirements the approximations have to meet and establish that the set of conditions satisfying each of them is dense.

\begin{enumerate}
	\item \label{item:DomainDense} For each $i \in I$, the set $D_i=\set{\d \in \P}{i \in \dom \d }$ is dense. 
	
	To see this, let $\a \in \P$ with domain $A$ and $i \in I \setminus A$. We need to find $\d \in D_i$ satisfying $\d \leq \a$. In order to find such a $\d$, first we must ensure that the value we assign to $i$ is not equal to anything in the range of $\a$, namely $\d(i) \neq \a(a)$ for each $a \in A$.
	Since the range of \(\a\) is smaller than \(\kappa\), this forbids fewer than \(\kappa\) many possible values for \(\d(i)\).
	
	Secondly, we must ensure the $T_d$-sequencings for $\d$ are injections. This amounts to ensuring that for each \(d\) and each $a \in A_{(d)}$, 
		$$\a(a)^{-1}\a(a+d) \neq \d(i)^{-1}\a(i+d)$$ and/or
		$$\a(a)^{-1}\a(a+d) \neq \a(i-d)^{-1}\d(i)$$ if $i-d$ and/or $i+d$ happen to be in $A$. Since there are strictly fewer than $\kappa$ many elements in the range of $\a$, this leaves fewer than $\kappa$ many elements of $G$ to avoid assigning $\d(i)$.
	In the case where $i-d$ and $i+d$ are both in $A$ we need to also make sure that $\d(i)^{-1}\a(i+d) \neq \a(i-d)^{-1}\d(i)$. As $G$ is abelian, this is the same as making sure that $$(\d(i))^2 \neq \a(i+d)\a(i-d).$$
	Again, since the range of $\a$ is small, there are fewer than $\kappa$ many forbidden
	values for $\d(i)^2$ and, since $G$ is squareful, this gives fewer than $\kappa$ new
	forbidden values for $\d(i)$.
	
	This means that altogether the set of values to rule out for $\d(i)$ has size less than $\kappa$, and we can just pick an element of \(G\) that has not been forbidden and assign it to \(\d(i)\).
	Then \(\d\) is a partial \(T_\infty\)-terrace with \(i\) in its domain. 
	
	\item For each $g \in G$, the set $D_g=\set{\d \in \P}{g \in \ran \d }$ is dense. 
	
	Again, as in the above case, the idea should be that we only have to avoid fewer than $\kappa$ many cases, but we have room in $I$ for that.
	
	Let $\a$ be a condition with $g \in G \setminus \ran \a$. We need to find $\d \in D_g$ satisfying $\d \leq \a$. This amounts to finding $\g$ so that we can let $\d(\g)=g$.
	This $\g$ of course cannot be in $A=\dom \a$. We also choose such that $\g\pm d$ is not
	in $A$ for any $d$ such that $A_{(d)}$ is nonempty, and such that $\g \neq \frac{a+a'}{2}$ 
	for any pair $a,a'\in A$. This avoids any issues in the partial $T_d$-sequencings and again forbids only fewer than $\kappa$
	many values for $\g$, so we can make a suitable choice. 
		
	\item For each $g \in G$ and each $d \in \F^+$ with $I_{(d)}\neq \emptyset$, the set $D^d_g=\set{\d \in \P}{g \in \ran \d_{(d)}}$ is dense. 
	
	To see this, fix $d \in \F^+$ such that $I_{(d)}\neq \emptyset$ and let $g \in G$. Let $\a \in \P$, and suppose that $g \notin \ran\a_{(d)}$. We want to see that it is possible to extend $\a$ to a condition $\d \in D^d_g$ such that $g=\d(\overline g)^{-1}\d(\overline g+d)$ for some $\overline g \in I$. This amounts to finding a suitable $\overline g$. First we need $\overline g$ to be so that $\overline g \notin A_{(d)}$ where $A = \dom \a$. Then we need to ensure that $\d(\overline g), \d(\overline g+d) \notin \ran \a$, and also obviously that $g=\d(\overline g)^{-1}\d(\overline g+d)$. Also make sure that neither $\g$ nor $\g+d$ fall halfway between elements of $A$.

	It must also be the case that for any $a \in A$, we have that 
		$$\a(a)^{-1}\d(\overline g) \notin \ran \a_{(\overline g-a)}, \ \  \d(\overline{g})^{-1}\a(a) \notin \ran \a_{(a-\overline g)},$$ 
		$$\a(a)^{-1}\d(\overline g+d)  \notin \ran \a_{(\overline g+d-a)}, \ \ \d(\overline g+d)^{-1}\a(a) \notin \ran \a_{(a-\overline g-d)}.$$

	Since we have only eliminated less than $\kappa$ many options, as we are restricted by $A$ and its image under $\a$, we have plenty of room to choose a $\overline g$ as desired.

We should note that we will not inadvertently disrupt another sequencing. In particular, if we have that both $\overline g+d', \overline g+d+d'\in A$ for some $d'\in I^+$ and $$\d(\overline g)^{-1}\a(\overline g+d') = \d(\overline g+d)^{-1}\a(\overline g+d+d'),$$ then, as $G$ is abelian, we must satisfy $$g= \d(\overline g)^{-1}\d(\overline g+d) = \a(\overline g+d')^{-1}\a(\overline g+d+d').$$ But this contradicts the requirement that $g \notin \ran \a_{(d)}$. Dually, whenever there is some $d'\in I^+$ such that $\overline g-d', \overline g-d-d' \in A$, and $$\a(\overline g-d')^{-1}\d(\overline g) = \a(\overline g+d-d')^{-1}\d(\overline g+d),$$ and this again contradicts $g \notin \ran \a_{(d)}$.
\end{enumerate}

Let $$\mathcal D = \set{D_i}{i\in I} \cup \set{D_g}{g\in G} \cup \set{D^d_g}{d \in \F^+\text{ with } I_{(d)}\neq \emptyset, g \in G},$$ and note that $|\mathcal D|=\kappa$, so we may enumerate all of the dense sets as $\mathcal D = \seq{\mathcal D_\alpha}{\alpha<\kappa}$.

We will define a descending sequence of conditions $\seq{\b_\alpha}{\alpha<\kappa}$ by transfinite induction, ensuring that $\b_\alpha\in \mathcal D_\alpha$ and $|\b_\alpha|<|\alpha+\omega|$, so that the sequence isn't growing at too fast a rate, at each step. (We saw it is possible to meet the dense sets without growing the conditions too fast while showing that each set in $\mathcal D$ is dense.) Assume that we have built an initial segment of this sequence $\seq{\b_\alpha}{\alpha<\lambda}$, and we wish to construct the condition $\b_\lambda$ in the next step. First notice that $\b_\lambda'=\bigcup_{\alpha<\lambda}\b_\alpha$ is itself a condition. This is because the only requirements are that $\b_\lambda'$ and $(\b_\lambda')_{(d)}$ be injective functions, and this
will be true if it was true for every earlier $\b_\alpha$. Furthermore, the domain of $\b_\lambda'$ is just the union of the domains of the earlier $\b_\alpha$ and each $\b_\alpha$ has size at most $|\alpha+\omega|$, 
so $\b_\lambda'$ itself is smaller than $\kappa$. We can now let $\b_\lambda$ be any extension of $\b_\lambda'$ in $\mathcal{D}_\lambda$; such an extension exists since we showed that
$\mathcal{D}_\lambda$ is dense.

By construction, $\bigcup_{\alpha<\kappa}\b_\alpha$ defines a directed $T_\infty$-terrace $\b: I \To G$ as desired:

\begin{enumerate}
	\item \emph{$\b$ is a bijection:} This is ensured by meeting, for each $i \in I$, the dense sets $D_i$ for injectivity and for meeting $D_g$ for each $g \in G$ for surjectivity.
	\item \emph{For each $d \in I^+$, $\b_{(d)}$ is a bijection:} The fact that the sequencing is injective is ensured by item \ref{item:DomainDense} as well, since at some point we will add both $i \in I$ and $i+d$ to the domain of the partial terrace we are constructing. The dense sets $D^d_g$ for each $g \in G$ guarantee surjectivity. \qedhere
\end{enumerate}
\end{proof}

\begin{cor}\label{cor:vatsquares}
For every index set~$I$ there is a Vatican square on~$I$.  In particular, there is a Vatican square of every infinite order.
\end{cor}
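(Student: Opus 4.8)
The plan is to read the corollary off Theorems~\ref{th:T_infty} and~\ref{th:terrace2square}, so that essentially all that remains is to supply, for each infinite cardinal, an abelian squareful group of that cardinality.

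First, given an index set~$I$, put $\kappa = |I|$. I would exhibit an abelian squareful group~$G$ with $|G| = \kappa$, the cleanest choice being the free abelian group of rank~$\kappa$, namely $\bigoplus_\kappa \Z$. This group is abelian; it has cardinality~$\kappa$, since its elements are the finitely supported functions $\kappa \to \Z$ and there are exactly~$\kappa$ of these; and it is torsion-free, so in additive notation the doubling map $g \mapsto 2g$ is injective. Hence $|\{ g^2 : g \in G \}| = |G| = \kappa$, i.e.\ $G$ is squareful. (Any torsion-free abelian group of cardinality~$\kappa$ would do equally well.)

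With such a~$G$ fixed, Theorem~\ref{th:T_infty} yields a directed $T_\infty$-terrace for~$G$; its proof applies to an arbitrary index set of cardinality~$|G|$, so in particular it produces a terrace on our given~$I$. Theorem~\ref{th:terrace2square} then converts this terrace into a Vatican square of order $|G| = \kappa$ on~$I$, which establishes the first assertion.

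For the ``in particular'' clause I must also verify that every infinite cardinal~$\kappa$ occurs as $|I|$ for some index set~$I$. Here I would take an ordered field~$\F$ with $|\F| = \kappa$ and set $I = \F$: then $I_{(d)} = \F$ for every~$d$, so~$I$ is an index set of cardinality~$\kappa$, and the previous paragraphs give a Vatican square of order~$\kappa$. The one point needing a moment's care is the existence of an ordered field of each infinite cardinality; the rational function field $\Q(x_\alpha : \alpha < \kappa)$ has cardinality~$\kappa$ and is formally real (any expression of $-1$ as a sum of squares would involve only finitely many indeterminates and hence lie in a subfield embeddable in~$\R$), so by Artin--Schreier it carries an order. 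I do not expect any genuine obstacle here: the substance is carried by the two cited theorems, and the only auxiliary facts---an abelian squareful group and an ordered field of each infinite cardinality---are both standard constructions.
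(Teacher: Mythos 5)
Your proposal is correct and follows exactly the paper's route: produce an abelian squareful group of order $|I|$ and feed it through Theorems~\ref{th:T_infty} and~\ref{th:terrace2square}. The paper simply asserts the existence of such a group without naming one, whereas you supply the concrete witness $\bigoplus_\kappa \Z$ and also verify that index sets of every infinite cardinality exist; these added details are accurate and harmless.
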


\begin{proof}
For every infinite order~$\kappa$ there is an abelian squareful group~$G$ of order~$\kappa$. Use~$G$ in Theorems~\ref{th:terrace2square} and~\ref{th:T_infty} to produce the required Vatican square.
\end{proof}

The constraints on~$G$ in Theorem~\ref{th:T_infty} were due to the method of proof rather than fundamental impediments.  Which other  infinite groups admit directed $T_D$- and~$T_{\infty}$-terraces?  Are there any that do not?  Vanden Eynden shows that all countably infinite groups have a directed 1-terrace on~$\N$ \cite{VE78} and the proof is easily adapted to apply to index set~$\Z$.  

The proof method of Theorem~\ref{th:T_infty} can be applied to an additional family of groups:

\begin{thm}\label{th:allinv}
If every non-identity element of an infinite abelian group $G$ is an involution, then $G$ has a directed~$T_{\infty}$-terrace for any index set of size~$|G|$. 
\end{thm}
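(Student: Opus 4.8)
The plan is to re-run the transfinite-induction argument of Theorem~\ref{th:T_infty} essentially unchanged, on the grounds that the only place the squareful hypothesis was used turns out to be vacuous for groups in which every non-identity element is an involution. So I would fix an index set $I$ in an ordered field $\F$ with $|I| = \kappa = |G|$ and work in the same poset $\P$ of partial directed $T_\infty$-terraces, ordered by extension. I would meet exactly the same three families of dense sets: the sets $D_i$ forcing each $i \in I$ into the domain, the sets $D_g$ forcing each $g \in G$ into the range, and the sets $D_g^d$ forcing each $g$ into $\ran \a_{(d)}$. The enumeration of $\mathcal{D}$ as $\seq{\mathcal D_\alpha}{\alpha<\kappa}$, the construction of the descending chain $\seq{\b_\alpha}{\alpha<\kappa}$ under the size bound $|\b_\alpha| < |\alpha+\omega|$, the argument that unions at limit stages remain conditions, and the final verification that the union is a directed $T_\infty$-terrace would all be copied verbatim.

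The one step that must be revisited is the density of $D_i$. There, when extending a condition $\a$ with domain $A$ to cover a new index $i$, the value $\d(i)$ had to avoid fewer than $\kappa$ \emph{linear} constraints (those keeping $\d$ and each $\d_{(d)}$ injective) together with the single genuinely \emph{quadratic} constraint $\d(i)^2 \neq \a(i+d)\a(i-d)$, which arises precisely when both $i-d$ and $i+d$ lie in $A$. Controlling the preimage of this quadratic constraint was the sole role of squarefulness. Under the present hypothesis, however, every element is its own inverse, so $\d(i)^2 = e$ no matter what value we assign, and the constraint collapses to the single inequality $e \neq \a(i+d)\a(i-d)$.

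The key observation, and the only point genuinely needing an argument, is that this inequality holds automatically. Since $\a$ is injective and $i+d \neq i-d$, we have $\a(i+d) \neq \a(i-d)$; and because each element is an involution, $\a(i+d)\a(i-d) = e$ would force $\a(i+d) = \a(i-d)^{-1} = \a(i-d)$, a contradiction. Hence the quadratic constraint imposes no condition on $\d(i)$ whatsoever, and $\d(i)$ need only dodge the fewer-than-$\kappa$ linear obstructions, exactly as in the squareful case. I expect this to be the entire content of the proof: the counting in the remaining two density arguments (for $D_g$ and $D_g^d$, including the midpoint-avoidance and the ``no inadvertent disruption'' computation) used only that $G$ is abelian and never referred to squares, so it transfers without change, as does the observation that $\d_{(d)}(i) = \d(i)^{-1}\d(i+d)$ can never equal $e$ by injectivity. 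Thus the ``main obstacle'' is not a new combinatorial difficulty but simply the verification that deleting the squareful hypothesis leaves the $D_i$ step intact, which the involution identity $\d(i)^2 = e$ secures.
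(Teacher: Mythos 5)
Your proposal is correct and follows exactly the paper's own route: the paper likewise reuses the proof of Theorem~\ref{th:T_infty} verbatim and observes that the only use of squarefulness is the constraint $(\d(i))^2 \neq \a(i+d)\a(i-d)$, which holds automatically since $(\d(i))^2 = e$ while $\a(i+d)\a(i-d) \neq e$ by injectivity of $\a$ and the involution property.
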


\begin{proof}
Such a group~$G$ is not squareful, but otherwise meets the constraints of Theorem~\ref{th:T_infty}.  Apply the same proof (with the same notation).  There is only one point at which we use squarefulness: given a value of~$i$ we need to choose a value for~${\bf d}(i)$ such that
$$(\d(i))^2 \neq \a(i+d)\a(i-d)$$
for all~$d$ for which $\a(i+d)$ and $\a(i-d)$ are both defined.  As $(\d(i))^2$ is necessarily the identity and $\a(i+d) \neq \a(i-d)$, this inequality unavoidably holds in~$G$.
\end{proof}

\section{Squares not based on groups}\label{sec:notgp}

The results of the previous section raise the question about what is and is not possible for infinite squares more generally.   Perhaps {\em all} countably infinite squares may be made complete, or even Vatican, with a suitable permutation of their rows and columns?   In a similar vein, it is known that all infinite Steiner triple systems are resolvable~\cite{DHW14}, an uncommon property among finite systems. However, Theorem~\ref{th:notrcls} eliminates this possibility, showing that for every index set (and hence every infinite order) there is a square that cannot be made row-complete via permuting columns.

In the other direction, a question asked (and answered positively) about finite squares was whether there exist row-complete Latin squares that are not based on groups, see \cite{CE91, DK15, Owens76}.  We answer the infinite version of this question, also positively, in Theorem~\ref{th:infvat}.  Indeed, this result gives a Vatican square that is not group-based for every index set.   All known finite Vatican squares are based on groups.

\begin{thm}\label{th:notrcls}
For every index set~$I$ there is a Latin square on~$I$ that cannot be made row-complete by permuting columns.
\end{thm}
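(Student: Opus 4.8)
The plan is to first translate row-completability into a statement about a single permutation of the columns. A permutation of the columns is a bijection $\pi\colon I\to I$, producing the square $L'(i,j)=L(i,\pi(j))$; since distance in $L'$ is measured with respect to $I$, the cells at distance~$1$ in $L'$ correspond to the original column pairs $(\pi(j),\pi(j+1))$ with $j\in I_{(1)}$. Writing $S=\pi\circ\mathrm{succ}\circ\pi^{-1}$ for the conjugate of the successor map $j\mapsto j+1$ on $I$, these pairs are exactly $\{(c,S(c))\}$. For columns $c,c'$ let $\sigma_{c,c'}$ be the fixed-point-free permutation of the symbol set sending the symbol in column~$c$ of a row to the symbol in column~$c'$ of that same row, and for distinct symbols $x,y$ let $h_{x,y}$ be the bijection of columns carrying the column of $x$ in a given row to the column of $y$ in that row. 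A direct check, modeled on the proof of Theorem~\ref{th:terrace2square}, shows that $L'$ is row-complete if and only if for every ordered pair $(x,y)$ of distinct symbols there is exactly one column~$c$ with $h_{x,y}(c)=S(c)$; equivalently, the adjacency permutations $\{\sigma_{c,S(c)}\}_{c\in I}$ cover each off-diagonal pair exactly once. Thus $L$ is row-completable precisely when some $S$ conjugate to the successor map on $I$ makes this family off-diagonal-complete.

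The next step is to construct, by transfinite recursion in the style of Theorem~\ref{th:T_infty}, a Latin square $L$ on $I$ for which no such $S$ exists. I would build $L$ as an increasing union of partial squares meeting dense requirements, using the freedom at each stage to control the data that a column permutation cannot disturb: under a column permutation $h_{x,y}$ is replaced by its conjugate $\pi^{-1}h_{x,y}\pi$, so the (column-permutation-invariant) joint structure of the family $\{h_{x,y}\}$ is exactly what an adversary rearranging the columns must contend with. The aim of the recursion is to install an obstruction into this joint structure forcing that, whatever $S$ is chosen, some ordered pair is covered either zero times or at least twice. One should note that the construction must be genuinely non-group-based: for a Cayley-table square the condition above reduces to the existence of a directed $1$-terrace (a sequencing) of the group, which by Vanden~Eynden's theorem every countable group possesses, so group-based squares are typically completable---mirroring the situation for the non-group Vatican squares of Theorem~\ref{th:infvat}.

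The main obstacle is the universal quantifier over column permutations. There are up to $2^{\kappa}$ candidate bijections~$\pi$, far more than the $\kappa$ stages available, so one cannot simply diagonalize against each rearrangement; the failure of completability must instead be read off from a single invariant of $L$ that survives every column permutation. The heart of the argument is therefore to isolate such an invariant---the conjugacy type of the family $\{h_{x,y}\}$, or equivalently the unavailability of any off-diagonal-complete subfamily of the $\sigma_{c,c'}$ whose supporting edges $\{(c,S(c))\}$ form a single orbit isomorphic to the successor map on $I$---and to prove that the invariant planted by the recursion is incompatible with off-diagonal-completeness. Establishing that one fixed square defeats all rearrangements simultaneously, rather than beating them one at a time, is the crux of the proof.
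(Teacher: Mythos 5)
Your reduction of row-completability to the existence of a single bijection $S$, conjugate to the successor map, for which each $h_{x,y}$ has exactly one ``coincidence'' with $S$, is correct, and you have also correctly located the crux: there are up to $2^{\kappa}$ column permutations but only $\kappa$ stages of recursion, so the square must carry one invariant that defeats all of them at once. But the proposal stops exactly there. You never say what the obstruction actually is: ``the conjugacy type of the family $\{h_{x,y}\}$'' is not an invariant you compute, construct, or show to be incompatible with off-diagonal-completeness, and no dense sets are specified that would plant it. As written this is a plan for a proof with its central combinatorial idea left as a placeholder, so there is a genuine gap.

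The missing idea in the paper's argument is a purely \emph{local} gadget rather than a global conjugacy invariant. Observe that the cyclic $3\times 3$ Latin square (the Cayley table of $\Z_3$) has the property that under \emph{every} ordering of its three columns, some ordered pair of symbols occurs twice at horizontal distance $1$ among its three rows (the adjacent-pair ``differences'' are constant down each pair of columns). The paper's construction therefore maintains, as part of the definition of a condition, that the partial square is \emph{immune}: for every triple of nonempty columns there is, in three otherwise fresh rows and on three fresh symbols, a copy of this cyclic $3\times 3$ block sitting in exactly those columns. Whenever a new column is created while meeting a density requirement, an ``immunization'' step adds such a block for every new triple of columns; this costs only about $\mu^3\cdot 9$ new cells for a condition of size $\mu$, so the growth rate stays below $\kappa$. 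At the end, for any column permutation $\pi$, pick three columns that land in mutually adjacent positions; the $3\times 3$ block planted on that triple forces a repeated ordered pair at distance $1$, so the permuted square is not row-complete. If you want to salvage your write-up, this gadget (or some concrete substitute with the same ``bad under every column ordering'' property) is precisely what must be installed by your recursion and verified to survive in the limit.
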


\begin{proof}
We build a Latin square on an ordered field $\F_\kappa$ of size $\kappa=|I|$ by approximations to it, by transfinite induction on $\kappa$. The same procedure would give one on $I$.

Consider the poset $\P$ consisting of injective partial functions from $\F_\kappa\times\F_\kappa$ to $\kappa$ whose domains have size strictly less than $\kappa$. We think of these as populating a $\kappa$-by-$\kappa$ grid with ordinals less than $\kappa$---each condition fills in some portion, or region, of the full grid. These regions must satisfy that each ordinal appears at most once in each row/column (they are Latin). Moreover we require that the Latin regions in $\P$ are immune: a region is \textit{immune} if every Latin region  obtainable by a permutation of its columns has a repeated occurrence of an ordered pair at horizontal distance~1. Again, partially order the immune Latin regions by $l\leq m$ if $l$ extends $m$ as functions.

We will start our transfinite induction with the following finite immune Latin region, a condition which we name $l_{-1}$:
$$
\begin{array}{ccc}
2  & 0 & 1 \\ 
1 & 2 &  0  \\
 0  & 1 & 2 
\end{array}
$$
We again list the requirements that the approximations will have to meet and show that densely many conditions satisfy these requirements.
\begin{enumerate}
	\item For each $\alpha, \beta<\kappa$, the set $D_{\alpha,\beta}=\set{l \in \P}{\alpha \text{ appears in row } \beta \text{ of } l}$ is dense.
	
	In other words, we need to see how to add a desired number $\alpha$ to row $\beta$ of a condition $l$, if it isn't already there. Place $\alpha$ into the $\beta$-th row, again without violating the Latin constraint (this might require us to introduce a new column).
	
	We have produced a larger Latin rectangle $l'$ which contains $l$ and has $\alpha$ in the $\beta$-th row but may not be immune, if we introduced a new column. We now perform the following immunization procedure to extend $l'$ to an immune Latin region in $D_{\alpha,\beta}$.
\begin{description}
		\item[Immunization:] For each combination of three different nonempty columns from our new region $l'$, pick the least number $\gamma$ larger than the largest number used in $l'$, and add the following three columns in entirely new rows above the three different columns: 
				$$\begin{array}{ccccc}
			\gamma+2  && \gamma   &&  \gamma+1 \\ 
			\gamma+1  && \gamma+2 &&  \gamma \\
			\gamma  && \gamma+1 && \gamma+2
		\end{array}$$ in whatever order you would like. Keep doing this with every possible combination of three different columns until all of the possible combinations of 3 different columns have been exhausted. Then fill in the gaps with whatever you would like without contravening the Latin constraints. 
		\end{description}
		To illustrate the technique, let's say that we would like to add the number $3$ to row $0$ in our initial immune Latin rectangle $l_{-1}$. Following the described algorithm, we first produce
		the following region $l_{-1}'$ 
		$$\begin{array}{cccc}
			2  & 0   &  1 & .  \\ 
			1 & 2 &  0  & . \\
			 0  & 1 & 2 & \color{red} 3
		\end{array}$$
		We added 3 to the first row of a new column. In this example, there are 4 columns available and thus 4 possible combinations of 3 columns, so we end up adding 12 new rows to the top of our condition.		
		$$\begin{array}{cccc}
						. &\color{blue}15& \color{blue}13&\color{blue}14\\
						. &\color{blue}14&\color{blue}15&\color{blue}13\\
						. &\color{blue}13&\color{blue}14&\color{blue}15\\
						\color{cyan}12&.&\color{cyan}10&\color{cyan}11\\
						\color{cyan}11&.&\color{cyan}12&\color{cyan}10\\
						\color{cyan}10&.&\color{cyan}11&\color{cyan}12\\
						\color{blue}9&\color{blue}7&.&\color{blue}8\\
						\color{blue}8&\color{blue}9&.&\color{blue}7\\
						\color{blue}7&\color{blue}8&.&\color{blue}9\\
						\color{cyan}6&\color{cyan}4&\color{cyan}5&.\\
						\color{cyan}5&\color{cyan}6&\color{cyan}4&.\\
						\color{cyan}4&\color{cyan}5&\color{cyan}6&.\\
						2  & 0   &  1 &. \\ 
						1 & 2 &  0 & . \\
						0  & 1 & 2 & {\textcolor{red} 3}
		\end{array}$$
	
		\item For each $\alpha, \beta<\kappa$, the set $D^{\alpha, \beta}=\set{l\in \P}{\alpha \text{ appears in column } \beta \text{ of } l}$ is dense. 
		
		Here we need to add a specified number $\alpha$ to the column number $\beta$ of a condition $l$, if it isn't already there. The strategy is the same as before: place $\alpha$ in column $\beta$ as required so that the region $l'$ produced is still Latin. Then run through the immunization procedure as outlined above if $l$ doesn't have any entries in column $\alpha$ to begin with, to produce a new condition extending $l'$ that is in $D^{\alpha, \beta}$.
		
		\item For each $\alpha, \beta < \kappa$, the set $E_{\alpha, \beta}=\set{l \in \P}{\text{$l$ has a symbol in row $\alpha$, column $\beta$}}$ is dense.
		
		Meeting this dense set ensures that we fill up all the holes we might have left when we met other dense sets. Given an immune Latin region $l$, in order to add something to position $(\alpha, \beta)$ simply place a symbol that is not in $l$ in that position to produce $l'$. If $l$ did not have anything in column $\beta$ yet, run through the immunization procedure outlined above on $l'$ to make it immune.
\end{enumerate}

We enumerate the listed dense sets into a sequence $\set{D_\alpha}{\alpha<\kappa}$ and build a descending chain of conditions $\seq{l_\alpha}{\alpha<\kappa}$ by transfinite induction, ensuring that $l_\alpha\in D_\alpha$ at each step and that $|l_\alpha|<|\alpha+\omega|$ so that the sequence isn't growing at too fast a rate. Note that if a condition $m$ has infinite size $\mu$, the immunization process will add at most $|\mu^3 \cdot 9|$ many new entries to a condition, which has size $\mu$. So the growth constraint on the sequence is reasonable given how we chose to meet dense sets in the above.

Proceed the same way as in the proof of Theorem~\ref{th:T_infty}. Given an initial segment of this sequence $\seq{l_\alpha}{\alpha<\lambda}$, notice that $l_\lambda'=\bigcup_{\alpha<\lambda}l_\alpha$ is a condition, since the growth rate of the sequence is not too high. We then let $l_\lambda$ be any extension of $l_\lambda'$ in $D_\lambda$.

We have produced $L=\cup \set{l_\alpha}{\alpha<\kappa}$, which defines a Latin square with the desired properties. Indeed, by meeting the dense sets we have ensured that the square is total and that every column and row contain all ordinals below $\kappa$. Moreover, if $L$ failed to be Latin, this failure would show up in some approximation $l_\alpha$, but these are all Latin, so $L$ must be as well. Finally, it isn't possible to permute columns in $L$ to obtain a row-complete square. To see this, suppose otherwise, that we could produce a row-complete Latin square $L^*$ by permuting columns of $L$. Then take the first three columns of $L^*$. These correspond to some columns, say $\alpha$, $\beta$, and $\gamma$, of $L$. There has to be a condition, say $l_\delta$, in which all three of these columns are nonempty for the first time. In that stage, we made sure that $l_\delta$ was immune; meaning every region obtained by a permutation of its columns has a repeated occurrence of an ordered pair at horizontal distance 1. In the way we performed the immunization procedure, we made sure that if you were to permute columns so that columns $\alpha$, $\beta$, and $\gamma$ were all ``next" to each other (according to the ordered field $\F_\kappa$), there would be a $3\times3$ Latin square somewhere, which can't be row-complete. This contradicts $L'$ being row-complete. So $L$ is a Latin square of size $\kappa$ which cannot be made row-complete by permuting columns, as desired.
\end{proof}

Prior to giving Theorem~\ref{th:infvat} we need a result that lets us be sure that a square is not group-based.  The {\em quadrangle criterion} states that in a square based on a group if the three equations
$$a_{i_1j_1} = a_{i_2j_2}, \ a_{k_1j_1} = a_{k_2j_2}, \ a_{i_1l_1} = a_{i_2l_2}$$
are satisfied then $a_{k_1l_1} = a_{k_2l_2}$ \cite[Theorem~1.2.1]{DK15}.  That is, if two ``quadrangles" in a group-based square agree on three points then they agree on the fourth.

\begin{thm}\label{th:infvat}
For every index set~$I$  
there is a Vatican square on~$I$ that is not based on a  group.
\end{thm}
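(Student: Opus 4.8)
The plan is to build the required square \emph{directly} by transfinite induction, in the style of Theorems~\ref{th:T_infty} and~\ref{th:notrcls}, rather than from a group terrace. The one new ingredient relative to Theorem~\ref{th:T_infty} is that I would seed the construction with a fixed finite configuration violating the quadrangle criterion. Since entries are never removed once placed, the finished square retains this violation, and so by the quadrangle criterion it cannot be group-based, while the usual dense-set machinery forces it to be Vatican.

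Concretely, fix a symbol set $X$ with $|X|=\kappa=|I|$ and let $\P$ be the poset of \emph{partial Vatican-compatible Latin squares}: partial functions $p:I\times I\To X$ whose domains have size $<\kappa$, which are injective on each row and column, and which obey the ``at most once'' restriction---no ordered pair of distinct symbols occurs twice at a common distance $d$ in rows, nor twice at a common distance in columns. Order $\P$ by extension. I would start the induction at a finite condition $p_{-1}$ chosen to violate the quadrangle criterion: pick distinct row indices $i_1,i_2,k_1,k_2$ and distinct column indices $j_1,j_2,l_1,l_2$ of $I$ and set
\begin{gather*}
p_{-1}(i_1,j_1)=0,\quad p_{-1}(i_2,j_2)=0,\quad p_{-1}(k_1,j_1)=1,\quad p_{-1}(k_2,j_2)=1,\\
p_{-1}(i_1,l_1)=2,\quad p_{-1}(i_2,l_2)=2,\quad p_{-1}(k_1,l_1)=3,\quad p_{-1}(k_2,l_2)=4.
\end{gather*}
Here three of the four quadrangle equations hold while the fourth fails, since $3\neq 4$. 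To keep $p_{-1}\in\P$ I would choose these eight indices \emph{generically}, so that the only repeated unordered symbol pairs---$\{0,2\}$ occurring in rows $i_1,i_2$ and $\{0,1\}$ occurring in columns $j_1,j_2$---land at distinct distances; then no pair repeats at a common distance and $p_{-1}$ is Vatican-compatible.

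The dense sets are exactly those needed to certify the three target properties. For totality, for each $(i,j)$ take $\set{p\in\P}{(i,j)\in\dom p}$. For the Latin property in the limit, for each row $i$ (resp.\ column $j$) and symbol $x$ take the conditions placing $x$ in that row (resp.\ column). For the \emph{existence} half of the Vatican property, for each $d$ with $I_{(d)}\neq\emptyset$ and each ordered pair $(x,y)$ of distinct symbols, take the conditions realizing $(x,y)$ at distance $d$ in some row, and likewise in some column; the \emph{uniqueness} half is already the invariant built into $\P$. Each set is dense by the same counting as in Theorem~\ref{th:T_infty}: to fill a cell or realize a prescribed pair I would use fresh, generic indices and a value from $X$, whereupon the Latin and ``at most once'' constraints forbid only $<\kappa$ possibilities out of $\kappa$, leaving a legal choice. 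Avoiding distance coincidences while realizing a pair is handled precisely as the condition $\g\neq\frac{a+a'}{2}$ in Theorem~\ref{th:T_infty}.

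There are $\kappa$ dense sets, so I would enumerate them and build a descending chain $\seq{p_\alpha}{\alpha<\kappa}$ starting from $p_{-1}$, meeting the $\alpha$-th dense set at stage $\alpha$ and keeping $|p_\alpha|<|\alpha+\omega|$ so that unions at limit stages remain in $\P$. The union $L=\bigcup_{\alpha<\kappa}p_\alpha$ is then a total Latin square that is Vatican, and since $L\supseteq p_{-1}$ it retains the quadrangle-criterion violation, hence is not group-based. The hard part will be the seed together with the Vatican density: exhibiting a configuration that \emph{simultaneously} fails the quadrangle criterion and is Vatican-compatible forces the genericity of the eight indices, and verifying that realizing a prescribed distance-$d$ pair never silently creates a second occurrence of some other pair at some distance. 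Both are resolved by always working at fresh indices chosen generically away from the finitely many bad positions determined by the current (small) domain, exactly as in the density arguments of Theorem~\ref{th:T_infty}.
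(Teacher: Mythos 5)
Your proposal is correct and follows essentially the same route as the paper: a poset of partial Vatican regions seeded with a finite quadrangle-criterion violation, dense sets for totality, Latinness, and the existence half of the Vatican property (uniqueness being the poset invariant), and a descending $\kappa$-chain whose union inherits the seed and hence is not group-based. The only cosmetic difference is the seed itself---the paper uses an explicit $4\times 4$ array while you place eight entries at generically chosen indices---and both choices work.
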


\begin{proof}
Let $\F_\kappa$ be an ordered field on $\kappa=|I|$.
We build a Vatican square $L$ on $\F_\kappa$ with symbol set $\kappa$ by transfinite induction. The strategy to build one on an arbitrary index set is exactly the same as outlined below, potentially starting with a different starting sub-square we call $p_{-1}$ depending on your index set and symbol set.

Again we find it useful to define an appropriate poset, to build the Vatican square by growing a finite one via meeting dense sets. The conditions in our poset $\P$ consist of Vatican regions which are not group based. These are injective partial functions of the form $p:\F_\kappa \times \F_\kappa\To\kappa$. Again we think of these populating $\kappa\times\kappa$ grid, although distances between columns and rows are computed by the ordered field $\F_\kappa$. Moreover, these regions should be Vatican in the sense of a finite square, in that for each $d\in \F_\kappa^+$ such that ${(\F_\kappa)}_{(d)}\neq\emptyset$ we have that each ordered pair of distinct symbols coming from $\kappa$ appear at distance $d$ at most once in each order in rows and at most once in each order in columns. We partially order $\P$ by letting $l \leq m$ so long as $p$ extends $q$ as functions.

We start the induction with the following Vatican region on $\kappa$, call it $l_{-1}$:
$$\begin{array}{cccc}
	\color{blue} 0 & 5 & 6 & \color{blue} 1 \\ 
	7 & \color{blue} 0 & \color{blue}1 & 8  \\
	9 & \color{blue}2 & \color{blue} 3 & 10 \\
	\color{blue} 2 & 11 & 12 & \color{blue} 4 
\end{array}$$
No square containing this as a subsquare can be group-based since it fails to satisfy the quadrangle criterion.
We then begin meeting dense sets in some enumeration of the following four families of dense sets.
We build a descending chain of conditions $l_\alpha$, starting with $l_{-1}$, in $\kappa$ many stages exactly as before. At each stage we use the fact that the union of a descending chain of fewer than $\kappa$ many conditions is itself a condition, making sure that the sequence doesn't grow at too fast a rate (we see below that it is possible to meet each dense set by only adding at most 2 symbols to a condition), and then extend into the dense set under consideration at that stage.

\begin{enumerate}
	\item For each $\alpha, \beta <\kappa$, the set $D_{\alpha,\beta}=\set{l \in \P}{\alpha \text{ appears in row } \beta \text{ of } l}$ is dense.
	
	The trick here is not to mess up the Vatican property on the region. If $\alpha$ does not appear in row $\beta$ of a condition $p$ yet, it is not necessarily enough to simply add it to the end of row $\beta$, since it is possible that then $\alpha$ appears more than once at some distance from another element in that row or column. We are only guaranteed to be safe with that method if $\alpha$ doesn't already appear anywhere in the partial square. Thus the idea is to go far enough out to where it is safe to add $\alpha$ in row $\beta$.
	
	If $l$ doesn't even have a row $\beta$ yet, then on the $\beta$th row, add $\alpha$ at the end, creating a new column with just $\alpha$ in it.
	
	Otherwise, $l$ already has entries in row $\beta$, go far enough out in row $\beta$ (at worst the length of its longest row plus $\beta$ in the field addition of $\F_\kappa$) so that no column or row distance between an element of row $\beta$ of $l$ and $\alpha$ could even have occurred in $p$ initially.
	
	For example, if $\F_\kappa$ has the usual ordering on the natural numbers, in the above square $l_{-1}$, this is how we would add 0 to the bottom row:
$$\begin{array}{cccccccc}
	0 & 5 & 6 & 1 &.&.&.&.\\ 
	7 &  0 & 1 & 8 &.&.&.&. \\
	9 & 2 & 3 & 10&.&.&.&. \\
	2 & 11 & 12 & 4 & . & . & . & \color{red}0
\end{array}$$
and if we would like to add 0 to the sixth row:
$$\begin{array}{ccccccc}
	. & .& .& .& \color{red} 0\\
	. & . &. & . & .\\
	0 & 5 & 6 & 1 & .  \\ 
	7 &  0 & 1 & 8 & . \\
	9 & 2 & 3 & 10 & . \\
	2 & 11 & 12 & 4 & .
\end{array}$$

	\item For each $\alpha, \beta <\kappa$, the set $D^{\alpha,\beta}=\set{l \in \P}{\alpha \text{ appears in column } \beta \text{ of } l}$ is dense.
	
	Same procedure as with rows.	
	
	\item For each $\alpha, \beta < \kappa$, the set $F_{\alpha, \beta}=\set{l \in \P}{\text{there is a symbol in row $\alpha$, column $\beta$ of $l$}}$ is dense.
	
	This dense set guarantees that we fill in the holes left by meeting the other dense sets. If there isn't already an entry in coordinate $(\alpha, \beta)$ of $l$, then pick a symbol that hasn't appeared yet in $p$ and add it to that entry. The condition produced is Latin since that entry only appears once. Since this symbol did not previously appear in any of the entries of $p$, it must also be Vatican.
	
	\item For each $\alpha, \beta<\kappa$ and $d \in \F_\kappa^+$, the set $$E^d_{\alpha, \beta}=\set{l \in \P}{\text{$\alpha$, $\beta$ appear at distance $d$ apart in some row of $l$, in that order}}$$ is dense.
	
	If $\alpha$ and $\beta$ already do not appear distance $d$ apart anywhere in a Vatican region $l$, what we should do to absolutely guarantee we have no conflicts is start a new row. Go far enough out in this row, up to the length of the longest row. Then add $\alpha$, followed by enough symbols so as to be able to add $\beta$ at distance $d$. This indeed will still be a condition, since $\beta$ and $d$ are less than $\kappa$.
	
	For example, with our starting square $l_{-1}$, this is how we would extend it to have 0 and 1 be distance 2 apart (if the field $\F_\kappa$ has the same ordering on finite numbers that the natural numbers do):
$$\begin{array}{ccccccc}
	. &.  &.  & . & \color{red}0 & . &\color{red}1\\
	0 & 5 & 6 & 1 &. &. &. \\ 
	7 &  0 & 1 & 8 &. &. &.  \\
	9 & 2 & 3 & 10 &. &. &. \\
	2 & 11 & 12 & 4 &. &. &. 
\end{array}$$
	
	\item For each $\alpha, \beta<\kappa$ and $d \in \F_\kappa^+$, the set $$E_d^{\alpha, \beta}=\set{l \in \P}{\text{$\alpha$, $\beta$ appear at distance $d$ apart in some column of $p$, in that order}}$$ is dense.
	
	Same procedure as with rows.

\end{enumerate}

We produce a square $L$ at the end of this construction by taking the union of all of the $l_\alpha$'s in the chain we described building above. Clearly $L$ is Latin, since at some stage every ordinal less than $\kappa$ was added to every row and every column as guaranteed by our first three families of dense sets. It must be that $L$ is Vatican as well. First of all, we know that the pair occurrence for each row and column must be satisfied at least once in each row and column by meeting the last two families of dense sets described above. Moreover if this happened somewhere more than once, it would have to happen in some condition $l_\alpha$, but conditions in $\P$ are not allowed to have this property. And $L$ is not
group-based since it includes the square $l_{-1}$.
\end{proof}

\section{Semi-Vatican squares}\label{sec:semivat}

Infinite Semi-Vatican squares---recall that these are squares in which each pair of symbols appears exactly once at each distance~$d$ in rows and columns, rather than exactly once in each order---behave very similarly to Vatican squares.  

The required generalization of~directed $T_{D}$- and~$T_{\infty}$-terraces are directed $S_{D}$- and~$S_{\infty}$-terraces.  As before, let~$I$ be an index set in an ordered field~$\F$.  Let~$G$ be a group of order~$|I|$.  For a bijection~${\bf a}: I \rightarrow G$ define a function~${\bf a}_{(d)}: I \rightarrow G \setminus \{ e \}$ for each~$d \in \F^+$ with~$I_{(d)} \neq \emptyset$ by
$${\bf a}_{(d)}(i) = {\bf a}(i)^{-1}{\bf a}(i+d).$$
If~$G$ has no involutions, and if there is a~$D$ such that for all~$d < D$ with~$I_{(d)} \neq \emptyset$ we have that the image of ${\bf a}_{(d)}$ contains exactly one occurrence from each set~$\{ x,x^{-1} : x \in G\setminus \{e\} \}$, then~${\bf a}$ is a {\em directed $S_D$-terrace for~$G$}.  If ${\bf a}_{(d)}$ has this property for all~$d$ with~$I_{(d)} \neq \emptyset$ then ${\bf a}$ is  a {\em directed $S_{\infty}$-terrace} for~$G$.

The requirement that~$G$ has no involutions comes into play when we consider constructing semi-Vatican squares using the method of Theorem~\ref{th:terrace2square}.  Suppose~$z \in G$ is an involution and~${\bf a}_{(d)} = z$ for some bijection~${\bf a}$ with the usual definition for~${\bf a}_{(d)}$.  If~${\bf a}(i) = x$, then ${\bf a}(i+d) = xz$.  There is a~$j$ such that~${\bf a}(j) = xz$ and now~${\bf a}(i+d) = xz^2 = x$.  Thus the pair~$\{ x, xz \}$ occurs twice at distance~$d$ in~$L({\bf a})$, once in each order.  Hence a square constructed with this method using a group with an involution cannot be semi-Vatican.

The proofs of the previous two sections require only minor modifications to give the following slate of results:

\begin{thm}\label{th:semi_terrace2square}
Let~$G$ be a group of infinite order~$\kappa$ with no involutions.  If~$G$ has a directed $S_{\infty}$-terrace for an index set~$I$ then there is a semi-Vatican square of order~$|G|$.
\end{thm}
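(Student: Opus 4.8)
The plan is to mirror the proof of Theorem~\ref{th:terrace2square} exactly, replacing the role of ordered pairs by unordered pairs and the $T_\infty$ (bijectivity) condition by the $S_\infty$ (one representative per inverse pair) condition. Let $\a$ be a directed $S_\infty$-terrace for $G$ on $I$ and form $L(\a) = (\ell_{ij})$ with $\ell_{ij} = \a(i)^{-1}\a(j)$; this is automatically a Latin square. Fix distinct symbols $x, y \in G$ and a distance $d$ with $I_{(d)} \neq \emptyset$. The goal is to show that the unordered pair $\{x, y\}$ occurs exactly once at distance $d$ in rows and exactly once at distance $d$ in columns.

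For the row count, set $w = x^{-1}y$, which lies in $G \setminus \{e\}$ since $x \neq y$. The two possible orderings of $\{x, y\}$ at horizontal distance $d$ correspond to the two values $\a_{(d)}(j) = x^{-1}y = w$ (the case $x$ followed by $y$) and $\a_{(d)}(j) = y^{-1}x = w^{-1}$ (the case $y$ followed by $x$). Because $G$ has no involutions, $w \neq w^{-1}$, so these are genuinely distinct targets; and the $S_\infty$-terrace hypothesis guarantees that the image of $\a_{(d)}$ meets $\{w, w^{-1}\}$ in exactly one element, attained at a unique index $j$. Having fixed this $j$, solve $\a(i)^{-1}\a(j) = x$ (or $= y$, according to which symbol should occupy column $j$); this determines a unique row $i$, and the identity $\a(i)^{-1}\a(j+d) = \a(i)^{-1}\a(j)\,\a_{(d)}(j)$ confirms that the other symbol lands in column $j+d$ of the same row. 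Thus $\{x, y\}$ appears exactly once at distance $d$ in rows.

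The column count is handled symmetrically by setting $v = xy^{-1}$ and observing that $x$ above $y$ at vertical distance $d$ forces $\a_{(d)}(i) = xy^{-1} = v$, while $y$ above $x$ forces $\a_{(d)}(i) = v^{-1}$; the same $S_\infty$ argument produces a unique $i$, and then a unique $j$, realizing the unordered pair exactly once in columns. Combining the two counts shows that $L(\a)$ is semi-Vatican.

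I do not expect a serious obstacle here, as the argument is a routine transcription of the earlier proof; the one point that must be stated with care is the use of the no-involutions hypothesis, which is precisely what ensures $w \neq w^{-1}$ (and $v \neq v^{-1}$) so that the two orderings of an unordered pair correspond to two distinct group elements. Without it, the set $\{w, w^{-1}\}$ could collapse to a singleton and the bookkeeping that turns ``one representative per inverse pair'' into ``each unordered pair exactly once'' would fail---indeed, this matches the obstruction to semi-Vaticanness displayed in the discussion preceding the theorem. The only other thing to verify explicitly is that the count is ``exactly once'' rather than ``at most once,'' which follows from the surjectivity built into the terrace definition, since the image of $\a_{(d)}$ contains a representative of every inverse pair in $G \setminus \{e\}$.
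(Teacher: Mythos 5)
Your proposal is correct and is exactly the ``minor modification'' of the proof of Theorem~\ref{th:terrace2square} that the paper intends for this result: you translate the two orderings of an unordered pair at distance~$d$ into the two targets $w$ and $w^{-1}$ for $\a_{(d)}$, and the $S_\infty$ condition (together with the no-involution hypothesis ensuring $w\neq w^{-1}$) gives exactly one occurrence in rows and, symmetrically, in columns. Nothing further is needed.
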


\begin{thm}\label{th:semi_T_infty}
Let~$G$ be an involution-free abelian squareful group of infinite order~$\kappa$.   Then~$G$ has a directed $S_{\infty}$-terrace.
\end{thm}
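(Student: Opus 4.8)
The plan is to re-run the transfinite construction of Theorem~\ref{th:T_infty} almost verbatim, changing only the standing poset invariant and the third family of dense sets so as to encode the weaker ``one representative of each inverse-pair'' condition defining an $S_\infty$-terrace, and using involution-freeness exactly where squarefulness was used before. First I would record the structural simplification afforded by the hypotheses: since $G$ is abelian with no involutions, the squaring map $g \mapsto g^2$ is injective, because $w^2 = v^2$ gives $(wv^{-1})^2 = e$ and hence $wv^{-1}=e$. Consequently its image has cardinality $\kappa$ (so such a group is automatically squareful), and every inverse-pair $\{x,x^{-1}\}$ with $x \neq e$ genuinely has two elements.

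For the poset, I would let $\P$ consist of partial injections $\a : I \to G$ with $|\dom\a| < \kappa$ such that, for every $d$ with $I_{(d)} \neq \emptyset$, the partial function $\a_{(d)}$ is injective \emph{and} its range contains at most one element of each inverse-pair $\{x, x^{-1}\}$ (equivalently, $\a_{(d)}$ never takes both a value and its inverse), ordered by reverse inclusion as before. I would keep the dense sets $D_i = \set{\d}{i \in \dom\d}$ and $D_g = \set{\d}{g \in \ran\d}$ unchanged, but replace the third family by $D^d_{\{g\}} = \set{\d}{\ran\d_{(d)} \cap \{g, g^{-1}\} \neq \emptyset}$, one for each $d$ with $I_{(d)} \neq \emptyset$ and each non-identity inverse-pair. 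Meeting all of these forces each final $\a_{(d)}$ to meet every inverse-pair at least once, while the standing invariant forces it to meet each at most once; together these say $\a_{(d)}$ selects exactly one representative from each pair, which is precisely the $S_\infty$ requirement. Note $\mathcal D$ still has size $\kappa$.

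The density verifications are the corresponding arguments of Theorem~\ref{th:T_infty} with a single change: wherever that proof forbade a value $v$ from entering some $\ran\a_{(d)}$, I must now forbid both $v$ and $v^{-1}$ to preserve the invariant. This at most doubles each forbidden set, keeping every one of size $<\kappa$. Concretely, for $D_i$ the new sequencing values $\d(i)^{-1}\a(i+d)$ and $\a(i-d)^{-1}\d(i)$, together with their inverses, must avoid the relevant ranges, forbidding $<\kappa$ values of $\d(i)$; the coincidence of these two new values with each other is ruled out exactly as before by requiring $(\d(i))^2 \neq \a(i-d)\a(i+d)$, and their being mutually inverse would force $\a(i-d) = \a(i+d)$, impossible by injectivity. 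This is where involution-freeness does the work of the squarefulness step: since squaring is injective, the $<\kappa$ forbidden square-values translate to \emph{exactly} $<\kappa$ forbidden values of $\d(i)$. For $D^d_{\{g\}}$, I would choose a fresh $\g$ with $\g,\g+d \notin \dom\a$ avoiding half-sums (so no unwanted coincidences arise), set $\d(\g+d) = \d(\g)g$ so that $g$ enters $\ran\a_{(d)}$---legitimate since neither $g$ nor $g^{-1}$ lay there by assumption---and pick $\d(\g)$ to dodge the $<\kappa$ values (and inverses) that would collide in $\ran\a$ or in the sequencings at the distances $|\g - a|$, $|\g + d - a|$ for $a \in \dom\a$; the ``no inadvertent disruption'' paragraph of Theorem~\ref{th:T_infty} transfers unchanged.

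Finally I would enumerate $\mathcal D$ and build the descending chain with the same growth bound, taking unions at limits exactly as in Theorem~\ref{th:T_infty}; the union $\b$ is a bijection by the $D_i$ and $D_g$, and each $\b_{(d)}$ is injective and meets each inverse-pair exactly once by the invariant together with the $D^d_{\{g\}}$, so $\b$ is a directed $S_\infty$-terrace. The only genuinely new point to check---and the step I expect to be the main obstacle---is that the invariant ``$\a_{(d)}$ omits at least one of each $\{x,x^{-1}\}$'' is simultaneously preserved under extension and compatible with the $D^d_{\{g\}}$, i.e.\ that forcing $g$ into some $\ran\a_{(d)}$ never later compels the insertion of $g^{-1}$; this is precisely what the doubled forbidden-value counts guarantee, so no individual step can be blocked, and the remainder is the bookkeeping of Theorem~\ref{th:T_infty}.
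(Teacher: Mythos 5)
Your proposal is correct and follows the paper's route: the paper gives no separate proof of this theorem, merely asserting that the argument of Theorem~\ref{th:T_infty} ``requires only minor modifications,'' and your modifications (weakening the poset invariant on $\ran \a_{(d)}$ to at most one representative per inverse-pair, replacing the third dense family by $D^d_{\{g\}}$, closing each forbidden set under inverses, and using injectivity of the squaring map in place of squarefulness) are exactly the right ones. The one slightly optimistic phrase is that the ``no inadvertent disruption'' paragraph transfers \emph{unchanged}: the new possibility of a mutual-inverse clash between the two sequencing values created at a common distance $d'$ is a genuinely extra constraint, but it reduces to forbidding fewer than $\kappa$ values of $\d(\g)^2$ and is disposed of by the same injectivity of squaring, so the proof goes through.
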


\begin{cor}\label{cor:semi_vatsquares}
For every index set~$I$ there is a semi-Vatican square on~$I$.  In particular, there is a semi-Vatican square of every infinite order.
\end{cor}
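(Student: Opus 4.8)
The plan is to mirror the proof of Corollary~\ref{cor:vatsquares}, feeding an appropriate group into the semi-Vatican analogues Theorems~\ref{th:semi_terrace2square} and~\ref{th:semi_T_infty} in place of Theorems~\ref{th:terrace2square} and~\ref{th:T_infty}. The whole corollary reduces to a single group-theoretic existence statement: for every infinite cardinal~$\kappa$ there is an abelian group of order~$\kappa$ that is \emph{both} squareful \emph{and} involution-free. Granting this, given an index set~$I$ with $|I| = \kappa$, Theorem~\ref{th:semi_T_infty} supplies a directed $S_\infty$-terrace for such a group on~$I$, and Theorem~\ref{th:semi_terrace2square} turns that terrace into a semi-Vatican square on~$I$. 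As this works for an arbitrary index set, and each infinite order is realized by some index set, we obtain the ``in particular'' clause as well.

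First I would exhibit the required groups. A natural uniform choice is a $\Q$-vector space~$V$ of dimension~$\kappa$, equivalently $V = \bigoplus_{\kappa} \Q$, and I would verify three properties. Its cardinality is~$\kappa$: its elements are the finitely supported functions $\kappa \to \Q$, and for infinite~$\kappa$ there are $\kappa \cdot \aleph_0 = \kappa$ of these. It is involution-free: writing the group additively, an involution would be a nonzero $v$ with $2v = 0$, but~$V$ is torsion-free, so none exists. It is squareful: in additive notation squarefulness asks that $\{\,2v : v \in V\,\}$ have cardinality~$|V|$, and since~$V$ is divisible the doubling map is surjective, so $\{\,2v : v \in V\,\} = V$. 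Hence~$V$ is an involution-free abelian squareful group of order~$\kappa$.

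The only genuinely new ingredient compared with the Vatican case is the involution-free requirement, so the one point to watch is that the squareful abelian group used in Corollary~\ref{cor:vatsquares} must now be upgraded to one that is also involution-free. This rules out the most naive candidates (for instance any group with $2$-torsion) but is met cleanly by any torsion-free divisible group, and the hard part—such as it is—is simply recognizing that these two conditions can be satisfied simultaneously at every infinite cardinality. With the group~$V$ in hand there is nothing further to do but invoke Theorems~\ref{th:semi_T_infty} and~\ref{th:semi_terrace2square} in sequence, exactly as in the Vatican argument.
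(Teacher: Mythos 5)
Your proof is correct and follows exactly the route the paper intends: the paper leaves this corollary's proof implicit (as a ``minor modification'' of Corollary~\ref{cor:vatsquares}), and you correctly identify that the only new ingredient is an abelian group of each infinite order~$\kappa$ that is simultaneously squareful and involution-free, which your choice of a $\Q$-vector space of dimension~$\kappa$ supplies. Nothing is missing.
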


\begin{thm}\label{th:semi_notrcls}
For every index set~$I$ 
there is a Latin square on~$I$ that cannot be made semi-Vatican by permuting columns.
\end{thm}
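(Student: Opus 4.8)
The plan is to reuse the construction from the proof of Theorem~\ref{th:notrcls} essentially verbatim, observing that the obstruction it already builds into the square is strong enough to defeat semi-Vaticanness, not merely row-completeness. Concretely, I would again work over an ordered field $\F_\kappa$ of size $\kappa=|I|$, take the poset $\P$ of immune Latin regions of size less than $\kappa$, start from the same $3\times 3$ cyclic seed, and meet the same three families of dense sets (add a prescribed symbol to a prescribed row, add one to a prescribed column, and fill a prescribed cell), applying the identical immunization procedure after any step that opens a new column. Transfinite induction then produces a Latin square $L$ on $I$ such that bringing any three columns into consecutive positions exposes a $3\times 3$ cyclic Latin subsquare.

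The key observation is that this $3\times 3$ subsquare obstructs semi-Vaticanness for exactly the reason it obstructs row-completeness. In the cyclic square, each of the unordered pairs of distinct symbols occurs twice at horizontal distance $1$, so the row semi-Vatican requirement---that each unordered pair of distinct symbols appear at distance $1$ exactly once in rows---already fails there. In fact, no new immunization is needed: the immune property established in Theorem~\ref{th:notrcls} guarantees that after every column permutation some ordered pair recurs at distance $1$ in a row, and a repeated ordered pair is automatically a repeated unordered pair. Hence the row semi-Vatican property is violated after every column permutation.

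To finish, I would argue that permuting columns cannot repair this. Permuting columns leaves the within-column structure untouched, so it preserves column pair-occurrences up to relabelling which column carries them, and it can only rearrange the horizontal pair-occurrences in rows. Since $I_{(1)}\neq\emptyset$ under our standing assumption, a single violation at distance $d=1$ in rows already rules out semi-Vaticanness, and the immune property supplies such a violation for every column permutation. Therefore no permutation of columns makes $L$ semi-Vatican, which is exactly the claim. The growth-rate bookkeeping, the density verifications, and the union-of-a-chain argument are all inherited unchanged from Theorem~\ref{th:notrcls}.

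The main obstacle is conceptual rather than technical: one must notice that the distance-$1$, ordered-pair obstruction of Theorem~\ref{th:notrcls} transfers to the (unordered, all-distances, both-axes) semi-Vatican setting \emph{without strengthening}, because a repeated ordered pair forces a repeated unordered pair and because failing the requirement at the single distance $d=1$ in rows is enough to fail semi-Vaticanness outright. A bespoke obstruction aimed at higher distances or at the column axis would require genuinely new work, but the point of the proof is that it is unnecessary.
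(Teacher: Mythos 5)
Your proposal is correct and matches the paper's intent: the paper proves this result only by remarking that the proof of Theorem~\ref{th:notrcls} needs minor modification, and you have supplied exactly that argument, correctly noting that a repeated \emph{ordered} pair at distance~$1$ in a row forces a repeated \emph{unordered} pair, which already violates the semi-Vatican requirement at the single distance $d=1$. No further changes to the poset, the immunization procedure, or the transfinite induction are needed.
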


\begin{thm}\label{th:semi_infvat}
For every index set~$I$ there is a semi-Vatican square on~$I$ that is not based on a  group.
\end{thm}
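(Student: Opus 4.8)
The plan is to replay the transfinite construction of Theorem~\ref{th:infvat} almost verbatim, altering only the bookkeeping that separates the semi-Vatican property (each \emph{unordered} pair once at each distance) from the Vatican property (each \emph{ordered} pair once at each distance). I would fix an ordered field $\F_\kappa$ of size $\kappa=|I|$ with symbol set $\kappa$ and take as conditions the injective partial functions $p:\F_\kappa\times\F_\kappa\to\kappa$ of support size less than $\kappa$ that are \emph{semi-Vatican regions and not group-based}, where a region is semi-Vatican if, for every $d\in\F_\kappa^+$ with $(\F_\kappa)_{(d)}\neq\emptyset$, each unordered pair of distinct symbols occurs at distance $d$ at most once among rows and at most once among columns. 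This is a slightly stronger regional constraint than in the Vatican case, since it now also forbids a pair from appearing in both orders at a common distance. Order $\P$ by extension, as before. No involution hypothesis is needed here, in contrast to the group-based Theorems~\ref{th:semi_terrace2square} and~\ref{th:semi_T_infty}, precisely because the symbol set $\kappa$ carries no group structure to interfere.

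For the seed I would reuse the $4\times4$ region $l_{-1}$ of Theorem~\ref{th:infvat}. It still violates the quadrangle criterion, so no square extending it is group-based; and one checks directly that it is a legitimate semi-Vatican region, since the only symbols occurring twice are $0,1,2$, and the three unordered pairs among them never recur at a common row-distance or column-distance. For a different index or symbol set, any small region that both fails the quadrangle criterion and has no repeated unordered pair at a repeated distance will serve as $p_{-1}$.

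The dense families are those of Theorem~\ref{th:infvat}, with the two pair-placement families collapsed from ordered to unordered pairs. For each $\alpha,\beta<\kappa$ the row-filling sets $D_{\alpha,\beta}$, column-filling sets $D^{\alpha,\beta}$, and hole-filling sets $F_{\alpha,\beta}$ are unchanged; for each unordered pair $\{\alpha,\beta\}$ and each $d\in\F_\kappa^+$ I would take $E^d_{\{\alpha,\beta\}}$ to consist of the conditions in which $\alpha,\beta$ occur at distance $d$ in some row in \emph{either} order, and dually $E_d^{\{\alpha,\beta\}}$ for columns. Density of each is established exactly as before by the ``go far enough out'' manoeuvre: I place the needed symbol(s) in fresh single-entry columns or rows at coordinates so large that every new row- and column-distance created exceeds all distances realised so far. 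Since those distances are brand new, the placement duplicates no existing pair-at-distance, and since each fresh column (resp.\ row) holds a single symbol it creates no new column (resp.\ row) pairs at all; hence the strengthened semi-Vatican constraint is preserved. The growth bound $|l_\alpha|<|\alpha+\omega|$ holds because, as in Theorem~\ref{th:infvat}, each dense set is met by adding at most two symbols, so the unions taken at limit stages are again conditions.

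Setting $L=\bigcup_{\alpha<\kappa}l_\alpha$, the filling and hole-filling families make $L$ a total Latin square, the collapsed pair families force each unordered pair to appear at least once at every admissible distance in rows and in columns, and the regional constraint forbids a second such occurrence from ever arising, so $L$ is semi-Vatican; finally $L\supseteq l_{-1}$, so it is not group-based. The single point that genuinely needs care --- and the one I would flag as the main obstacle --- is the passage from ordered to unordered bookkeeping together with the verification that the reused seed $l_{-1}$ satisfies the strengthened regional condition; the remainder is a faithful transcription of the proof of Theorem~\ref{th:infvat}.
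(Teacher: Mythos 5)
Your proposal is correct and is essentially the proof the paper intends: the paper states only that Theorem~\ref{th:infvat} ``requires only minor modifications,'' and you have supplied exactly those modifications --- strengthening the regional constraint to unordered pairs, collapsing the two ordered pair-placement dense families into unordered ones, and checking that the seed $l_{-1}$ (which still violates the quadrangle criterion) satisfies the stronger semi-Vatican condition, which it does since only $0,1,2$ repeat and none of their pairs recurs at a common distance. Your observation that no involution hypothesis is needed here, unlike in Theorems~\ref{th:semi_terrace2square} and~\ref{th:semi_T_infty}, is also correct.
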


All of the existence results presented so far are non-constructive and rely on transfinite induction.  Perhaps surprisingly, in the case when the group is~$(\R, +)$, the tools of undergraduate calculus are sufficient to construct to a semi-Vatican square.

\begin{thm}\label{th:svr}
There is a semi-Vatican square on index set~$\R$ based on~$(\R,+)$.
\end{thm}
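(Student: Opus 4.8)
The plan is to produce an explicit directed $S_\infty$-terrace $\a\colon\R\to\R$ for the additive group $(\R,+)$ and then feed it into Theorem~\ref{th:semi_terrace2square}; since $(\R,+)$ is involution-free, this immediately yields the desired semi-Vatican square $L(\a)$. Writing the group additively, the sequencing functions become $\a_{(d)}(i)=\a(i+d)-\a(i)$, and the $S_\infty$-condition asks that for every $d>0$ the image of $\a_{(d)}$ meet each pair $\{x,-x\}$ (with $x\ne 0$) exactly once. So everything reduces to building a single bijection $\a$ of $\R$ whose difference functions $\a_{(d)}$ are well-behaved simultaneously for all $d$.

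The key observation is that monotonicity handles the ``one representative per pair'' requirement for free. If $\a$ is a strictly increasing bijection of $\R$ onto $\R$, then $\a_{(d)}(i)=\a(i+d)-\a(i)>0$ for every $i$, so $\a_{(d)}$ can only ever take the positive member of each pair $\{x,-x\}$. Thus the whole $S_\infty$-condition collapses to the single clean analytic requirement that, for each $d>0$, the map $i\mapsto\a_{(d)}(i)$ be a bijection from $\R$ onto $(0,\infty)$. I would secure this by taking $\a$ to be $C^1$ and strictly convex, so that $\a_{(d)}$ is strictly increasing in $i$ (its $i$-derivative is $\a'(i+d)-\a'(i)>0$, hence it is injective), and by arranging the derivative limits $\a'(t)\to 0$ as $t\to-\infty$ and $\a'(t)\to\infty$ as $t\to+\infty$. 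Writing $\a_{(d)}(i)=\int_i^{i+d}\a'(t)\,dt$ and using the monotone bounds $d\,\a'(i)\le\a_{(d)}(i)\le d\,\a'(i+d)$, the two derivative limits force $\a_{(d)}(i)\to 0$ as $i\to-\infty$ and $\a_{(d)}(i)\to\infty$ as $i\to+\infty$, so the intermediate value theorem gives surjectivity onto $(0,\infty)$.

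For a concrete witness I would exhibit an elementary $\a$; for instance the piecewise function equal to $-\ln(1-t)$ for $t\le 0$ and $e^{t}-1$ for $t\ge 0$ is $C^1$, strictly increasing, strictly convex, and a bijection of $\R$ onto $\R$, and its derivative runs monotonically over all of $(0,\infty)$. All verifications are routine calculus: continuity and matching of the one-sided derivatives at $0$, positivity of $\a''$, and the limit computations above. Feeding this $\a$ into Theorem~\ref{th:semi_terrace2square} then completes the argument.

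I expect the main obstacle to be the negative tail, where two demands pull against each other. Surjectivity of $\a$ onto $\R$ requires $\a(t)\to-\infty$, i.e.\ $\int_{-\infty}^{0}\a'=\infty$, while surjectivity of each $\a_{(d)}$ down to $0$ requires $\a'(t)\to 0$ as $t\to-\infty$. Together these force $\a'$ to decay to $0$ and yet have divergent integral, so the naive choices (such as $\a(t)=e^{t}$ or an $\arctan$-type map) fail: the derivative must taper like $1/|t|$ rather than exponentially. Producing a function that simultaneously satisfies both constraints, stays globally convex, and remains onto $\R$ is the one genuinely delicate point, and it is exactly what the logarithmic-then-exponential piecewise choice is designed to handle.
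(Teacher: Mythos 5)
Your proposal is correct and follows essentially the same route as the paper, right down to the identical piecewise witness $\a(x)=e^x-1$ for $x\ge 0$ and $\a(x)=-\ln(1-x)$ for $x<0$; the paper likewise reduces the $S_\infty$-condition to showing each $\a_{(d)}$ is a bijection onto $\R^+$ via the monotone derivative. Your convexity and integral-bound argument simply fills in the surjectivity step that the paper leaves as a remark about the derivative being an increasing bijection onto $\R^+$.
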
 

\noindent
Proof.   We give a direct definition for a directed $S_{\infty}$-terrace~${\bf a}$:
\begin{equation*}
    {\bf a}(x) = \begin{cases}
               e^x   -1            & x \geq 0\\
               -\ln (1-x)       & \text{otherwise}
           \end{cases}
\end{equation*}
This is a continuous, strictly increasing bijection from~$\R$ to~$\R$.  Its derivative is:
\begin{equation*}
    {\bf a}'(x) = \begin{cases}
               e^x               & x \geq 0\\
              \frac{1}{1-x}       & \text{otherwise}
           \end{cases}
\end{equation*}
which is a continuous, strictly increasing bijection from~$\R^+$ to~$\R^+$.

Therefore, for each~$d \in \R^+$, we have that ${\bf a_{(d)}}$ is a bijection from~$\R^+$ to~$\R^+$.  Hence~${\bf a}$ is a directed $S_{\infty}$-terrace and Theorem~\ref{th:semi_terrace2square} gives a semi-Vatican square based on~$I = \R$.
\qed

Similar approaches for Vatican squares quickly run into difficulties.

\section{Orthogonality}\label{sec:orth}

Let~$G$ be a group and~$\theta: G \rightarrow G$ a bijection.  If $g \mapsto g^{-1}\theta(g)$ is a bijection then~$\theta$ is an {\em orthomorphism}; if  $g \mapsto g\theta(g)$ is a bijection then~$\theta$ is a {\em complete mapping}.  Two orthomorphisms, $\theta, \phi$ are {\em orthogonal} if $g \mapsto \theta(g)^{-1} \phi(g)$ is a bijection.

Let~$L$ be the Cayley table of a group~$G = \{ g_i : i \in I \}$ that has $ij^{\rm th}$ entry $g_i g_j$ and for any bijection~$\theta: G \rightarrow G$ define~$L_\theta$ to be the Latin square with $ij^{\rm th}$ entry $g_i \theta(g_j)$.  If~$\theta$ is an orthomorphism then~$L_\theta$ is orthogonal to~$L$ and if~$\theta$ and~$\phi$ are orthogonal orthomorphisms then~$L_\theta$ is orthogonal to~$L_\phi$; see, for example, \cite{Evans07}.

It's known that every infinite group has an orthomorphism~\cite{Bateman50}, so there is a pair of orthogonal Latin squares at every infinite order.  In~\cite{BM91}, countably many orthogonal Latin squares of countable order are constructed using mutually orthogonal orthomorphisms (although without using this terminology) of a specific countably infinite group.

The work of Section~\ref{sec:cayley} may be adapted to give families of mutually orthogonal orthomorphisms.  In the finite case, directed terraces and directed R-terraces are similar objects, with directed terraces giving rise to complete Latin squares and directed R-terraces giving rise to orthogonal Latin squares via orthomorphisms.  Our definitions and results preserve these connections as we move to the infinite.

Let~$I$ be an index set in an ordered field~$\F$.  Let~$G$ be a group of order~$|I|$.  For a bijection~${\bf a}: I \rightarrow G \setminus \{ e \}$ define a function~${\bf a}_{(d)}: I_{(d)} \rightarrow G \setminus \{ e \}$ for each~$d \in \F^+$ with~$I_{(d)} \neq \emptyset$ by
$${\bf a}_{(d)}(i) = {\bf a}(i)^{-1}{\bf a}(i+d).$$ Such a function is called a $R_d$-sequencing for $\a$, if it is a bijection.
If there is a~$D$ such that for all~$d < D$ with~$I_{(d)} \neq \emptyset$ we have that each ${\bf a}_{(d)}$ is a bijection, then~${\bf a}$ is a {\em directed $R_D$-terrace for~$G$}.  If ${\bf a}_{(d)}$ is a bijection for all~$d$  with~$I_{(d)} \neq \emptyset$ then ${\bf a}$ is  a {\em directed $R_{\infty}$-terrace} for~$G$.

\begin{thm}
Let~$G$ be a group of infinite order~$\kappa$.  If~$G$ has a directed $R_{\infty}$-terrace then~$G$ has a set of~$ \kappa$ mutually orthogonal orthomorphisms. 
\end{thm}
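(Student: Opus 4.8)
The plan is to read off one orthomorphism $\theta_d$ for each admissible distance $d$ directly from the directed $R_\infty$-terrace $\a\colon I \to G\setminus\{e\}$, and to obtain both the orthomorphism property and pairwise orthogonality from the sequencings $\a_{(d)}$. Throughout I would use the reformulation recorded at the start of Section~\ref{sec:orth}: a bijection $\theta\colon G\to G$ is an orthomorphism exactly when $g\mapsto g^{-1}\theta(g)$ is a bijection, and two orthomorphisms $\theta,\phi$ are orthogonal exactly when $g\mapsto \theta(g)^{-1}\phi(g)$ is a bijection. Producing $\kappa$ pairwise orthogonal orthomorphisms is then all that is needed, since the opening discussion of the section converts them into $\kappa$ mutually orthogonal squares $L_\theta$.

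The construction I propose is, for each $d\in\F^+$ with $I_{(d)}\neq\emptyset$, to set $\theta_d(e)=e$ and $\theta_d(\a(i))=\a(i+d)$. First I would verify that $\theta_d$ is an orthomorphism: the difference map $g\mapsto g^{-1}\theta_d(g)$ fixes $e$ and sends $\a(i)\mapsto \a(i)^{-1}\a(i+d)=\a_{(d)}(i)$, which is a bijection of $G$ because $\a_{(d)}$ is, by hypothesis, a bijection onto $G\setminus\{e\}$. Next, for distinct admissible distances, say $d<d'$, I would check that $\theta_d$ and $\theta_{d'}$ are orthogonal: the map $g\mapsto \theta_d(g)^{-1}\theta_{d'}(g)$ fixes $e$ and sends $\a(i)\mapsto \a(i+d)^{-1}\a(i+d')=\a_{(d'-d)}(i+d)$, i.e.\ the sequencing $\a_{(d'-d)}$ precomposed with the translation $i\mapsto i+d$; since $\a_{(d'-d)}$ is a bijection onto $G\setminus\{e\}$, this composite is again a bijection of $G$. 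Thus the $\theta_d$ are pairwise orthogonal, hence automatically distinct, as no orthomorphism is orthogonal to itself. Note that no commutativity is needed anywhere, consistent with the hypothesis that $G$ is merely a group. Finally I would count: there are $\kappa$ admissible distances, so $\{\theta_d\}$ is a family of the required size.

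The step I expect to demand the most care is confirming that each $\theta_d$ really is a \emph{total} bijection of $G$ and that $d'-d$ is again an admissible distance, since both hinge on the index set behaving well under translation. Concretely, $\theta_d(\a(i))=\a(i+d)$ is defined for every $i$ and surjects onto $G\setminus\{e\}$ precisely when $I+d=I$, and $\a_{(d'-d)}$ is available precisely when $I$ is closed under the relevant differences; on a badly chosen $I$ (for example $\N$, where translation loses an initial segment) the naive $\theta_d$ fails to be surjective, and since the difference map $\a(i)\mapsto\a_{(d)}(i)$ already exhausts $G\setminus\{e\}$ on the domain of $\a$, it cannot be repaired by extension. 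These difficulties vanish when $I$ is an additive subgroup of $\F$, such as $\Z$, $\Q$, $\R$, or $\F$ itself: then the admissible distances are exactly the elements of $I^+$, a set of cardinality $\kappa$, and $I$ is invariant under translation by and subtraction of such distances. I would therefore phrase the argument for a translation-invariant index set, which is where the $R_\infty$-terraces we construct naturally live, so nothing is lost; this is, in contrast to the constructions of Section~\ref{sec:cayley}, the one place where the particular choice of $I$ genuinely matters.
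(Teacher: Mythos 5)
Your construction $\theta_d(e)=e$, $\theta_d(\a(i))=\a(i+d)$, with the orthomorphism property read off from $\a_{(d)}$ and orthogonality from $\a_{(d'-d)}$ precomposed with the translation $i\mapsto i+d$, is exactly the paper's proof. Your closing caveat is well taken: the paper's argument silently assumes the index set is invariant under translation by admissible distances (so that each $\theta_d$ is total and surjective on $G\setminus\{e\}$), and restricting to such index sets, as you propose, is the right way to make the statement airtight without losing any of its applications.
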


\begin{proof}
Let~${\bf a}$ be a directed $R_{\infty}$-terrace for~$G$ over some index set~$I$.  For each~$d$ such that~$I_d \neq \emptyset$ (of which there are~$\kappa$) define~$\theta_d(e) = e$ and $\theta_d( {\bf a}(i) ) = {\bf a}(i+d)$.  This gives us the orthogonal orthomorphisms we're looking for.

First, they are orthomorphisms: given~$g \in G \setminus \{ e \}$ with~${\bf a}(i) = g$ we get
$$g^{-1}\theta_d(g) = {\bf a}(i)^{-1}{\bf a}(i+d) = {\bf a}_{(d)}(i)$$
which, when we also consider that~$\theta_d(e)=e$, gives us a bijection on~$G$.

Second, they are orthogonal: again taking~$g \in G \setminus \{ e \}$ with~${\bf a}(i) = g$, if~$d_2 > d_1$ we get: 
$$\theta_{d_1}^{-1}(g) \theta_{d_2}(g) =  {\bf a}(i+d_1)^{-1}{\bf a}(i+d_2)  = {\bf a}_{(d_2 - d_1)}(i+d_1) .       $$
If~$d_1 < d_2$ we get:
$$\theta_{d_1}^{-1}(g) \theta_{d_2}(g) =  {\bf a}(i+d_1)^{-1}{\bf a}(i+d_2)  = {\bf a}_{(d_1 - d_2)}(i+d_2)^{-1} .       $$
Also~$\theta_{d_1}(e)^{-1}\theta_{d_2}(e) =e$ in each case, giving bijections on~$G$.
\end{proof}

If we have a directed~$R_D$-terrace, then the same argument gives $ | \{ d \leq D : I_{(d)} \neq \emptyset \} |$ mutually orthogonal orthomorphisms for~$G$.

\begin{thm}\label{th:R_infty}
Let~$G$ be an abelian squareful group of infinite order.   Then~$G$ has a directed $R_{\infty}$-terrace.
\end{thm}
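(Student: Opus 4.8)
The plan is to recycle the transfinite-induction framework of Theorem~\ref{th:T_infty} almost verbatim, because the definition of a directed $R_\infty$-terrace differs from that of a directed $T_\infty$-terrace in exactly one respect: the bijection $\a$ is required to have range $G \setminus \{e\}$ rather than $G$. Since $G$ is infinite we have $|G \setminus \{e\}| = |G| = \kappa = |I|$, so such a bijection can exist, and the derived maps $\a_{(d)}(i) = \a(i)^{-1}\a(i+d)$ automatically avoid $e$ (injectivity of $\a$ forces $\a(i) \neq \a(i+d)$), exactly as in the $T_\infty$ case. Thus the requirement imposed on each $\a_{(d)}$ is literally unchanged, and only the range of $\a$ itself needs adjusting.

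First I would fix an index set $I$ in an ordered field $\F$ with $|I| = \kappa$ and take $\P$ to be the poset of partial directed $R_\infty$-terraces: injective partial functions $\a : I \To G \setminus \{e\}$ with $|\dom \a| < \kappa$ for which every $\a_{(d)}$ is injective, ordered by reverse extension. I would reuse the same three families of dense sets, altering only their indexing: for each $i \in I$ the set $D_i = \set{\d \in \P}{i \in \dom \d}$; for each $g \in G \setminus \{e\}$ the set $D_g = \set{\d \in \P}{g \in \ran \d}$; and for each such $g$ and each $d \in \F^+$ with $I_{(d)} \neq \emptyset$ the set $D^d_g = \set{\d \in \P}{g \in \ran \d_{(d)}}$. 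These again total $\kappa$, so they can be enumerated in order type $\kappa$ and met one at a time along a descending chain whose union, built with the same growth control $|\b_\alpha| < |\alpha + \omega|$, is the desired terrace.

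The density verifications are copied from Theorem~\ref{th:T_infty} with one cosmetic change: whenever a new value is assigned, it must additionally be kept off $e$. For $D_i$ this means the value $\d(i)$ must avoid $\ran \a$, keep all partial sequencings injective, satisfy $(\d(i))^2 \neq \a(i+d)\a(i-d)$, and now also differ from $e$; this one extra forbidden element changes nothing, since we are still ruling out fewer than $\kappa$ values of $\d(i)$ and fewer than $\kappa$ values of $(\d(i))^2$, and squarefulness converts the latter into fewer than $\kappa$ forbidden values of $\d(i)$, leaving a legal choice. For $D_g$ the assigned value is $g \neq e$ and the extra stipulation is automatic, while for $D^d_g$ it adds at most two forbidden elements; in both cases the arguments of Theorem~\ref{th:T_infty} survive untouched, as does the limit-stage check that a union of a chain of conditions is again a condition of size below $\kappa$.

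I do not expect a real obstacle, since all the genuine content---and in particular the use of squarefulness to tame the quadratic constraint $(\d(i))^2 \neq \a(i+d)\a(i-d)$---lives already in Theorem~\ref{th:T_infty}. The single point worth stating explicitly is why $\b = \bigcup_{\alpha < \kappa}\b_\alpha$ is a bijection \emph{onto $G \setminus \{e\}$}: meeting every $D_g$ with $g \neq e$ gives $G \setminus \{e\} \subseteq \ran \b$, the standing stipulation $\d(i) \neq e$ keeps $e \notin \ran \b$, and meeting every $D_i$ together with injectivity of each condition makes $\b$ a bijection from $I$ onto exactly $G \setminus \{e\}$. Injectivity of each $\b_{(d)}$ follows as before once both $i$ and $i+d$ lie in the domain, and the sets $D^d_g$ force $\b_{(d)}$ onto $G \setminus \{e\}$, so $\b$ is the required directed $R_\infty$-terrace.
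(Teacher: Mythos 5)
Your proposal is correct and is essentially the paper's own proof: the paper simply observes that the only difference from a directed $T_\infty$-terrace is that the identity is excluded from the range of $\a$, and that a simple adjustment of the proof of Theorem~\ref{th:T_infty} (which never relies on the identity being in the range) yields the result. You have carried out exactly that adjustment, just in more explicit detail.
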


\begin{proof}
The only difference between a directed~$R_{\infty}$-terrace and a directed $T_{\infty}$-terrace is that the identity is not in the domain of a directed~$R_{\infty}$-terrace.  The presence of the identity is not relied upon in the Theorem~\ref{th:T_infty}'s proof that abelian squareful groups of infinite order have a  directed $T_{\infty}$ terrace; a simple adjustment of the argument produces the required directed~$R_{\infty}$-terrace.
\end{proof}

This immediately gives:

\begin{cor}
There is a set of~$\kappa$ mutually orthogonal Latin squares of order~$\kappa$ for all infinite cardinalities~$\kappa$.
\end{cor}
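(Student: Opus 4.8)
The plan is to assemble the corollary directly from the two immediately preceding theorems together with the dictionary between orthomorphisms and orthogonal Latin squares recorded at the start of this section. First I would fix an infinite cardinal~$\kappa$ and, exactly as in the proof of Corollary~\ref{cor:vatsquares}, invoke the existence of an abelian squareful group~$G$ of order~$\kappa$. Applying Theorem~\ref{th:R_infty} to~$G$ produces a directed $R_{\infty}$-terrace~${\bf a}$, and feeding this into the preceding theorem (that a group with a directed $R_\infty$-terrace has $\kappa$ mutually orthogonal orthomorphisms) yields a family $\set{\theta_d}{d \in \F^+,\ I_{(d)} \neq \emptyset}$ of mutually orthogonal orthomorphisms of~$G$.

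Next I would translate orthomorphisms into Latin squares. Writing~$L$ for the Cayley table of~$G$ and~$L_{\theta_d}$ for the square with $ij^{\rm th}$ entry $g_i\theta_d(g_j)$, the remarks opening Section~\ref{sec:orth} tell us that each~$L_{\theta_d}$ is orthogonal to~$L$ (because~$\theta_d$ is an orthomorphism) and that~$L_{\theta_d}$ is orthogonal to~$L_{\theta_{d'}}$ whenever~$\theta_d$ and~$\theta_{d'}$ are orthogonal orthomorphisms. Hence the collection $\{L\} \cup \set{L_{\theta_d}}{I_{(d)} \neq \emptyset}$ is a set of mutually orthogonal Latin squares of order~$\kappa$.

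Finally I would carry out the cardinality bookkeeping. The index~$d$ ranges over a set of size~$\kappa$, so the family $\set{\theta_d}{I_{(d)} \neq \emptyset}$ has size~$\kappa$; adjoining the single extra square~$L$ does not change this, and so we obtain~$\kappa$ mutually orthogonal Latin squares of order~$\kappa$, as required.

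As for the main obstacle: there is essentially no new difficulty, since all the substantive work already resides in Theorem~\ref{th:R_infty} and in the mutually-orthogonal-orthomorphisms theorem that precedes it. The only points needing any care are the two routine verifications just mentioned---that the orthomorphism-to-square correspondence preserves \emph{mutual} orthogonality across the whole family at once (which is immediate from the stated pairwise equivalence), and that the final count is genuinely~$\kappa$ rather than merely infinite (which follows from the fact that there are exactly~$\kappa$ admissible distances~$d$). Thus the corollary is a direct consequence of the machinery developed above.
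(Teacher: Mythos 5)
Your proposal is correct and follows exactly the route the paper intends: the paper states this corollary with only the remark ``This immediately gives:'' after Theorem~\ref{th:R_infty}, and your argument simply spells out that implicit chain (squareful abelian group of order~$\kappa$, directed $R_\infty$-terrace, $\kappa$ mutually orthogonal orthomorphisms, then the orthomorphism-to-Latin-square dictionary). The cardinality check that there are $\kappa$ admissible distances~$d$ matches the parenthetical count already made in the paper's proof of the preceding theorem.
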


As with directed $T_{\infty}$-terraces, we can now ask which infinite groups have (or do not have) directed $R_{\infty}$-terraces.

If we do not insist that our orthomorphisms come from directed R-terraces we can completely remove the restrictions on the groups:

\begin{thm}
Let~$G$ be a group of infinite order~$\kappa$.  Then~$G$ has a set of $\kappa$ mutually orthogonal orthomorphisms and can hence be used to construct a set of~$\kappa$ mutually orthogonal squares of order~$\kappa$.
\end{thm}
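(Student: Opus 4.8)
The plan is to build the $\kappa$ orthomorphisms simultaneously by a single transfinite induction of the same forcing-style flavour as Theorem~\ref{th:T_infty}, and then invoke the correspondence between orthogonal orthomorphisms and mutually orthogonal squares recorded at the start of this section. Since $L_\theta$ is orthogonal to the Cayley table~$L$ whenever $\theta$ is an orthomorphism, and $L_\theta$ is orthogonal to $L_\phi$ whenever $\theta,\phi$ are orthogonal orthomorphisms, a family $\{\theta_\alpha : \alpha<\kappa\}$ of pairwise orthogonal orthomorphisms yields the set $\{L\}\cup\{L_{\theta_\alpha} : \alpha<\kappa\}$ of $\kappa$ mutually orthogonal Latin squares. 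So it suffices to produce the orthomorphisms, and every group manipulation below is a matter of solving for a single variable, so no commutativity is used.

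I would take as conditions the partial approximations $p$ that specify, for indices $\alpha$ in some set $S_p\subseteq\kappa$, injective partial functions $\theta_\alpha^p : G\to G$, with total size $|\set{(\alpha,g)}{\alpha\in S_p,\ g\in\dom\theta_\alpha^p}|<\kappa$, subject to three global injectivity constraints that must survive to limits: each $\theta_\alpha^p$ is injective; each partial difference map $g\mapsto g^{-1}\theta_\alpha^p(g)$ is injective; and for $\alpha\neq\beta$ in $S_p$ the partial map $g\mapsto\theta_\alpha^p(g)^{-1}\theta_\beta^p(g)$ is injective on the common domain. Order $\P$ by extension. The requirements, each of which I would show determines a dense set, are: (i) for every $\alpha<\kappa$ and $g\in G$, that $g\in\dom\theta_\alpha$ (this also activates index $\alpha$); (ii) for every $\alpha$ and $h$, that $h\in\ran\theta_\alpha$; (iii) for every $\alpha$ and $h$, that $h$ is hit by $g\mapsto g^{-1}\theta_\alpha(g)$; and (iv) for every pair $\alpha\neq\beta$ and every $h$, that $h$ is hit by $g\mapsto\theta_\alpha(g)^{-1}\theta_\beta(g)$. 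There are $\kappa$ such requirements, so I would enumerate them in order type $\kappa$ and build a descending chain of controlled growth exactly as in Theorem~\ref{th:T_infty}, taking unions at limits; all three constraints are preserved under unions of increasing chains, so the limit stages are unproblematic, and the union of the whole chain gives total bijections that are orthomorphisms and pairwise orthogonal.

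The content is in the density verifications, and here the key point is that each step assigns at most one or two new values and only ever forbids fewer than $\kappa$ candidates. For (iii), to realise a difference $h$ I would choose a point $g$ fresh for $\theta_\alpha$ and set $\theta_\alpha(g)=gh$, so that $g^{-1}\theta_\alpha(g)=h$; the constraints that $\theta_\alpha(g)$ collide with no existing value, difference, or orthogonality-difference each take the form ``$g$ avoids a fixed element,'' hence rule out fewer than $\kappa$ choices of $g$. For (iv), to realise $\theta_\alpha(g)^{-1}\theta_\beta(g)=h$ I would pick $g$ fresh for both $\theta_\alpha$ and $\theta_\beta$ and set $\theta_\alpha(g)=x$, $\theta_\beta(g)=xh$; injectivity of $\theta_\alpha$, injectivity of $\theta_\beta$, and the two orthomorphism-difference conditions each forbid fewer than $\kappa$ values of $x$, while the only new orthogonality-difference created is the desired $h$. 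Requirements (i) and (ii) are single-value extensions of the same kind.

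The step I expect to need the most care is isolating exactly why the quadratic obstruction of Theorem~\ref{th:T_infty} disappears here, allowing all hypotheses on $G$ to be dropped. There, squarefulness was forced because the $i\pm d$ symmetry of an index set in an ordered field made a single assigned value $\d(i)$ govern two differences that had to avoid one another, producing a condition on $\d(i)^2$. In the present setting the orthomorphisms are indexed by an abstract set $\kappa$ and there is no such symmetry, so each freshly assigned value controls only one new difference in each relevant map and we never solve for a square. Making this precise---checking that keeping the chosen points fresh genuinely decouples all the newly created differences, so that every forbidden set really has size less than $\kappa$---is where I would concentrate the write-up.
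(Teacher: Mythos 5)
Your proposal is correct and follows essentially the same route as the paper: the paper's proof uses exactly this poset of sets of mutually orthogonal partial orthomorphisms (with total domain size below $\kappa$) and the same four families of dense sets, meeting them along a $\kappa$-length descending chain. The only cosmetic difference is that the paper sidesteps the ``decoupling'' worry you flag at the end by always choosing the new point $g$ outside the domain of \emph{every} $\theta_j$, not just the one or two being extended, so that no unintended differences are created at all.
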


\begin{proof}
We build the required set of mutually orthogonal orthomorphisms of~$G$ by transfinite induction on~$\kappa$.  One difference to earlier proofs is that rather than partial functions the approximations to the desired object are sets of partial functions.

Let~$I$ be a set of size~$\kappa$.  Let $\P$ be the poset whose elements are sets of the form~$\d=\{ \theta_i : i \in I \}$ of partial injective functions from~$G$ to~$G$ with $|\bigcup_{i} \dom \theta_i|  < \kappa$ and are sets of mutually orthogonal partial orthomormorphisms in the sense that the partial functions $\eta_i: g \mapsto g^{-1}\theta_i(g)$ and~$\eta_{ij}: g \mapsto \theta_i^{-1}(g)\theta_j(g)$ (where~$i \neq j$) are also injective.  Let~$\a = \{ \theta'_i : i \in I \}$ and~$\b = \{ \theta_i : i \in I \}$.
The relation on~$\P$ is given by~$\a \leq \b$ if and only if for each~$i \in I$ we have that $\theta'_i$ extends~$\theta_i$ as a function;  that is,  $\dom \theta_i \subseteq \dom \theta'_i$ and $\theta'_i \rest \dom \theta_i = \theta_i$.

There are four requirements that the approximations have to meet.  We show that the set of conditions satisfying each of them is dense.

\begin{enumerate}

\item For each~$i \in I$ and~$g \in G$, the set $D_i^g = \{ {\bf d} \in \P : g \in \dom \theta_i \}$ is dense.

Let~${\bf a}$ in~$\P$ with~$g \not\in \dom \theta_i$.  We need a~${\bf d} \in D_i^g$ with~$g \in \dom \theta_i$ and ${\bf d} \leq {\bf a}$.  To extend~${\bf a}$ to~${\bf d}$, choose an $h$ such that $h \not\in \ran \theta_i$, $g^{-1}h \not\in \ran \eta_i$ and $h^{-1}\theta_j(g) \not\in \ran \eta_{ij}$ and set~$\theta_i(g) = h$.  Such an~$h$ exists as the ranges of each of the $\eta_i$ and~$\eta_{ij}$ are smaller than~$\kappa$.

\item For each~$i \in I$ and~$h \in G$, the set $D_i^h = \{ {\bf d} \in \P : h \in \ran \theta_i \}$ is dense.

Let~${\bf a}$ in~$\P$ with~$h \not\in \ran \theta_i$.  We need a~${\bf d} \in D_i^h$ with~$h \in \ran \theta_i$ and ${\bf d} \leq {\bf a}$.  To extend~${\bf a}$ to~${\bf d}$, choose a~$g$ such that $g \not\in \dom \theta_j$ for any~$j$ (including $j=i$) and $g^{-1}h \not\in \ran \eta_i$.  These choices are possible because the restricted sets in each case have size smaller than~$\kappa$.
Set~$\theta_i(g) = h$.

\item For each~$i \in I$ and~$h \in G$, the set $E_i^h = \{ {\bf d} \in \P : h \in \ran \eta_i \}$ is dense.

Let~${\bf a}$ in~$\P$ with~$h \not\in \ran \eta_i$.  We need a~${\bf d} \in E_i^h$ with~$h \in \ran \eta_i$ and ${\bf d} \leq {\bf a}$.  To extend~${\bf a}$ to~${\bf d}$, choose a~$g$ such that $g \not\in \dom \theta_j$ for any~$j$ (including $j=i$) and $gh \not\in \ran \theta_i$.  These choices are possible because the restricted sets in each case have size smaller than~$\kappa$.  Set~$\theta_i(g) = gh$  and hence $\eta_i(g) = h$.

\item For each~$i,j \in I$, with~$i \neq j$, and~$h \in G$, the set $E_{ij}^h = \{ {\bf d} \in \P : h \in \ran \eta_{ij} \}$ is dense.

Let~${\bf a}$ in~$\P$ with~$h \not\in \ran \eta_{ij}$.  We need a~${\bf d} \in E_{ij}^h$ with~$h \in \ran \eta_{ij}$ and ${\bf d} \leq {\bf a}$.  To extend~${\bf a}$ to~${\bf d}$, choose a~$g$ such that $g \not\in \dom \theta_k$ for any~$k$.   Choose a pair~$(h_i,h_j)$ from $(G \setminus \ran \theta_i) \times (G \setminus \ran \theta_j)$ such that~$h_i^{-1}h_j = h$, $g^{-1}h_i \not\in \ran \eta_i$ and $g^{-1}h_j \not\in \eta_j$. These choices are possible because the restricted sets in each case have size smaller than~$\kappa$.
 Set~$\theta_i(g) = h_i$ and~$\theta_j(g) = h_j$.

\end{enumerate}

The transfinite induction now goes through as usual.
\end{proof}

Another embellishment of the complete mapping concept is the strong complete mapping:
If~$\theta$ is both an orthomorphism and a complete mapping then it is a {\em strong complete mapping}.   

In~\cite{Evans12} it is claimed that every countably infinite group has a strong complete mapping.  The following result generalizes that to groups of arbitrary infinite order using essentially the same construction.  However, for the argument to go through we need that the group is squareful (recall that~$G$ is squareful if $|\{g^2 : g \in G \}| = |G|$), a condition that is required for the argument in~\cite{Evans12}, but which was not included.  In Theorem~\ref{thm:InftyAbSCM} we construct strong complete mappings for many non-squareful abelian groups.

\begin{thm}\label{thm:squarefulSCM}
Let~$G$ be a squareful group of infinite order~$\kappa$.  Then~$G$ has a strong complete mapping.
\end{thm}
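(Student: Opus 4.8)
The plan is to mirror the transfinite--induction arguments of Theorem~\ref{th:T_infty} and of the mutually orthogonal orthomorphism construction above: I would build $\theta$ as the union of a descending chain of small partial approximations, arranging that it meet enough dense sets to force $\theta$, the map $\eta\colon g\mapsto g^{-1}\theta(g)$, and the map $\phi\colon g\mapsto g\theta(g)$ all to be bijections. Concretely, let $\P$ be the poset of injective partial functions $\theta\colon G\To G$ with $|\dom\theta|<\kappa$ such that the induced partial functions $\eta$ and $\phi$ on $\dom\theta$ are \emph{also} injective, ordered by extension ($\theta'\leq\theta$ iff $\theta'$ extends $\theta$). A condition is exactly a partial strong complete mapping, and any union of a descending chain of fewer than $\kappa$ many conditions is again a condition, since injectivity of $\theta$, $\eta$, $\phi$ is preserved under unions and the domain stays small provided the chain grows slowly.

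First I would isolate four families of dense sets, $\kappa$ in all: for each $g\in G$ the set $\set{\theta\in\P}{g\in\dom\theta}$; and for each $h\in G$ the sets $\set{\theta\in\P}{h\in\ran\theta}$, $\set{\theta\in\P}{h\in\ran\eta}$, and $\set{\theta\in\P}{h\in\ran\phi}$. Meeting the first family makes $\theta$ total, the second makes $\theta$ surjective, and the last two make $\eta$ and $\phi$ surjective; together with the injectivity built into $\P$ this yields a strong complete mapping. To put $g$ into the domain, set $\theta(g)=x$; injectivity of $\theta$, $\eta$, $\phi$ forbids respectively $x\in\ran\theta$, $x\in g\cdot\ran\eta$, and $x\in g^{-1}\cdot\ran\phi$, each a set of size $<\kappa$, so a legal $x$ exists. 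To put $h$ into $\ran\theta$, pick a fresh $g$ and set $\theta(g)=h$, again avoiding only $<\kappa$ many forbidden $g$ coming from the $\eta$-- and $\phi$--injectivity requirements.

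The two interesting cases are forcing surjectivity of $\eta$ and of $\phi$. To obtain $h\in\ran\eta$ I must solve $g^{-1}\theta(g)=h$, i.e.\ set $\theta(g)=gh$; then $\phi(g)=g\theta(g)=g^{2}h$, so keeping $\phi$ injective requires $g^{2}\notin(\ran\phi)h^{-1}$. Dually, to obtain $h\in\ran\phi$ I set $\theta(g)=g^{-1}h$, whence $\eta(g)=g^{-2}h$, and keeping $\eta$ injective requires $g^{2}\notin h(\ran\eta)^{-1}$. (Neither case uses commutativity, matching the fact that the statement is for arbitrary squareful groups.)

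The main obstacle---and the only place squarefulness is used---is exactly these $g^{2}$ constraints: the forbidden values for $g^{2}$ form a set of size $<\kappa$, but a priori the preimage of a small set under squaring could be large. This is where $G$ squareful saves us: fix once and for all a section $S\subseteq G$ of the squaring map, so that $|S|=|\set{g^{2}}{g\in G}|=\kappa$ and $g\mapsto g^{2}$ is injective on $S$. Restricting the choice of $g$ to $S$ converts a forbidden set of $<\kappa$ many squares into a forbidden set of $<\kappa$ many elements of $S$, leaving $\kappa$ admissible choices. With density established, the usual transfinite induction finishes the proof: enumerate the $\kappa$ dense sets, build a descending chain $\seq{\theta_\alpha}{\alpha<\kappa}$ with $|\theta_\alpha|<|\alpha+\omega|$ that meets $\mathcal{D}_\alpha$ at stage $\alpha$ (taking unions at limits), and set $\theta=\bigcup_{\alpha<\kappa}\theta_\alpha$, which is the desired strong complete mapping.
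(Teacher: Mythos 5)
Your proposal is correct and follows essentially the same route as the paper's proof: the same poset of partial strong complete mappings, the same four families of dense sets, the same extension moves ($\theta(g)=gh$ to hit $\ran\eta$ and $\theta(g)=g^{-1}h$ to hit $\ran\zeta$), and squarefulness invoked at exactly the same two points. Your device of fixing a section of the squaring map just makes explicit the counting that the paper leaves implicit in ``the number of restricted elements is less than $\kappa$.''
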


\begin{proof}
We again use transfinite induction on~$\kappa$.   Let $\P$ be the poset whose elements are partial strong complete mappings (i.e.~injective partial functions~$\theta$ such that $\eta: g \mapsto g^{-1}\theta(g)$ and $\zeta: g \mapsto g\theta(g)$ are also injective) with domain of size less than~$\kappa$.  The relation is given by~$\theta' \leq \theta$ when $\dom \theta \subseteq \dom \theta'$ and $\theta' \rest \dom \theta = \theta$.

We show that the sets of conditions meeting the requirements are dense.

\begin{enumerate}

\item For each~$g \in G$ the set  $D_g = \{ \theta \in \P : g \in \dom \theta \}$ is dense.

Let~$\theta$ in~$\P$ with~$g \not\in \dom \theta$.  We need a~$\theta' \in D_g$ with~$g \in \dom \theta'$ and $\theta' \leq \theta$.  To extend~$\theta$ to~$\theta'$, choose an $h$ such that $h \not\in \ran \theta$, $g^{-1}h \not\in \ran \eta$ and $gh \not\in \ran \zeta$ and set~$\theta'(g) = h$.  Such an~$h$ exists as the ranges of each of the partial functions are smaller than~$\kappa$.

\item For each~$h \in G$ the set  $D_h = \{ \theta \in \P : g \in \ran \theta \}$ is dense.

Let~$\theta$ in~$\P$ with~$h \not\in \ran \theta$.  We need a~$\theta' \in D_h$ with~$h \in \ran \theta'$ and $\theta' \leq \theta$.
To extend~$\theta$ to~$\theta'$,  choose a $g$ such that $g \not\in \dom \theta$, $g^{-1}h \not\in \ran \eta$ and $gh \not\in \ran \zeta$ and set~$\theta'(g) = h$.  Such a~$g$ exists as the number of restricted elements is less than~$\kappa$.

\item For each~$h \in G$ the set  $D_h^{\eta} = \{ \theta \in \P : g \in \ran \eta \}$ is dense.

Let~$\theta$ in~$\P$ with~$h \not\in \ran \eta$.  We need a~$\theta' \in D_h$ with~$h \in \ran \eta'$ and $\theta' \leq \theta$.
To extend~$\theta$ to~$\theta'$,  choose a $g$ such that $g \not\in \dom \theta$, $gh \not\in \ran \theta$ and $g^2h \not\in \ran \zeta$ and set~$\theta'(g) = gh$.  Such a~$g$ exists as~$G$ is squareful and so the number of restricted elements is less than~$\kappa$.

\item For each~$h \in G$ the set  $D_h^{\zeta} = \{ \theta \in \P : g \in \ran \zeta \}$ is dense.

Let~$\theta$ in~$\P$ with~$h \not\in \ran \zeta$.  We need a~$\theta' \in D_h$ with~$h \in \ran \zeta'$ and $\theta' \leq \theta$.
To extend~$\theta$ to~$\theta'$,  choose a $g$ such that $g \not\in \dom \theta$, $g^{-1}h \not\in \ran \theta$ and $g^{-2}h \not\in \ran \eta$ and set~$\theta'(g) = h$.  Such a~$g$ exists as~$G$ is squareful and so the number of restricted elements is less than~$\kappa$.
\end{enumerate}

The result follows.
\end{proof}

Given groups $G_i$, $i\in I$, the {\em direct product}, $\prod_{i\in I}G_i$, is the group with elements $\prod_{i\in I}g_i$, $g_i\in G_i$, multiplication given by $\prod_{i\in I}g_i\prod_{i\in I}h_i=\prod_{i\in I}g_ih_i$. The {\em direct sum}, $\sum_{i\in I}G_i$, is the subgroup of $\prod_{i\in I}G_i$ consisting of elements $\prod_{i\in I}g_i$ in which $g_i\ne 1$ for finitely many values of $i$. 

\begin{thm}\label{thm:direct}
If $G_i$, $i\in I$, are groups that have strong complete mappings, then both $\prod_{i\in I}G_i$ and $\sum_{i\in I}G_i$ have strong complete mappings.
\end{thm}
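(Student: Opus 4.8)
The plan is to build the strong complete mapping on $\prod_{i\in I}G_i$ and on $\sum_{i\in I}G_i$ coordinatewise from the given strong complete mappings $\theta_i$ of the factors. Writing a typical element of the direct product as $\prod_{i\in I}g_i$, I would define
$$\theta\!\left(\prod_{i\in I}g_i\right) = \prod_{i\in I}\theta_i(g_i).$$
For the direct product this settles matters at once: $\theta$ is a bijection because it is coordinatewise bijective, and the two auxiliary maps factor as
$$g^{-1}\theta(g) = \prod_{i\in I} g_i^{-1}\theta_i(g_i), \qquad g\,\theta(g) = \prod_{i\in I} g_i\,\theta_i(g_i),$$
whose $i$th coordinate is a bijection of $G_i$ because $\theta_i$ is respectively an orthomorphism and a complete mapping. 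Hence both auxiliary maps are coordinatewise bijections, so $\theta$ is a strong complete mapping of $\prod_{i\in I}G_i$.

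The direct sum requires one extra idea, since the same $\theta$ need not preserve $\sum_{i\in I}G_i$: if $\theta_i(1)\neq 1$ for infinitely many $i$ then even the image of the identity acquires infinitely many nontrivial coordinates and escapes the direct sum. To repair this I would first normalize each factor's map. Right-translation preserves both defining properties, since for any $a\in G_i$ the maps $g\mapsto g^{-1}\theta_i(g)a$ and $g\mapsto g\,\theta_i(g)a$ are obtained from the bijections $g\mapsto g^{-1}\theta_i(g)$ and $g\mapsto g\,\theta_i(g)$ by composing with right multiplication by $a$; thus $g\mapsto\theta_i(g)a$ is again a strong complete mapping. Taking $a=\theta_i(1)^{-1}$, I may assume without loss of generality that $\theta_i(1)=1$ for every $i$.

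With this normalization, put $\eta_i(g)=g^{-1}\theta_i(g)$ and $\zeta_i(g)=g\,\theta_i(g)$; these are bijections of $G_i$ fixing the identity, as $\eta_i(1)=\zeta_i(1)=\theta_i(1)=1$. Consequently $\theta$ and the coordinatewise maps $g\mapsto g^{-1}\theta(g)=\prod_{i}\eta_i(g_i)$ and $g\mapsto g\,\theta(g)=\prod_{i}\zeta_i(g_i)$ each send a tuple with only finitely many nontrivial coordinates to another such tuple, hence restrict to well-defined self-maps of $\sum_{i\in I}G_i$. Each restriction is a bijection of the direct sum: injectivity is coordinatewise, and surjectivity follows by inverting coordinatewise, where the fact that $\theta_i^{-1}$ (and likewise $\eta_i^{-1},\zeta_i^{-1}$) fixes the identity keeps each preimage inside the direct sum. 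This makes $\theta$ a strong complete mapping of $\sum_{i\in I}G_i$.

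I expect the only genuine obstacle to be this normalization step: recognizing that the naive coordinatewise map can leave the direct sum, and that right-translating each $\theta_i$ to fix the identity repairs the defect while preserving both the orthomorphism and the complete-mapping property. Once the identity is fixed in every coordinate, all remaining claims are routine coordinatewise verifications.
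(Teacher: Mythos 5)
Your proposal is correct and takes essentially the same route as the paper: the coordinatewise map for the direct product, and for the direct sum the same normalization $\theta_i(g)\theta_i(1)^{-1}$ to make each factor's mapping fix the identity. The paper states these verifications as routine, whereas you spell out why right-translation preserves both the orthomorphism and complete-mapping properties; the substance is identical.
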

\begin{proof}
Let $G=\prod_{i\in I}G_i$, and, for each $i\in I$, let $\phi_i$ be a strong complete mapping of $G_i$. It is routine to show that $\phi\colon \prod_{i\in I}g_i\mapsto \prod_{i\in I}\phi_i(g_i)$ is a strong complete mapping of $G$.
 
Let $H=\sum_{i\in I}G_i$, and, for each $i\in I$, let $\phi'_i$ be defined by $\phi'_i(x)=\phi_i(x)\phi_i(1)^{-1}$: $\phi'_i$ is a strong complete mapping of $G_i$ that fixes $1$. It is routine to show that $\phi'\colon \sum_{i\in I}g_i\mapsto \sum_{i\in I}\phi'_i(g_i)$ is a strong complete mapping of $H$.
\end{proof}

Which abelian groups have strong complete mappings? This question is answered for finite abelian groups. 

\begin{thm}\label{thm:finite2gps}
A finite abelian  group has a strong complete mapping if and only if its Sylow $2$-subgroup is trivial or non-cyclic and its Sylow $3$-subgroup is trivial or non-cyclic.
\end{thm}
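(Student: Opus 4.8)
The plan is to prove both implications by reducing, through the primary decomposition $G=\prod_p S_p$ into Sylow subgroups, to statements about abelian $p$-groups, handling the two directions with very different tools: a ``moment'' computation pushed through a projection for necessity, and explicit (largely linear) constructions combined with the direct-product result (Theorem~\ref{thm:direct}) for sufficiency.

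For necessity, suppose $\theta$ is a strong complete mapping, so that $\theta$, the complete-mapping displacement $g\mapsto g\theta(g)$, and the orthomorphism displacement $g\mapsto g^{-1}\theta(g)$ are all bijections. I would argue the contrapositive one prime at a time. If the Sylow $p$-subgroup is cyclic and nontrivial, of order $p^k$, then there is a surjection $\pi\colon G\to \Z_{p^k}$ that is exactly $m$-to-one with $m=|G|/p^k$ coprime to $p$. Working additively in the ring $\Z_{p^k}$, I would compare the $r$-th moments $\sum_{g\in G}\pi(\,\cdot\,)^r$ of the three bijections. For $p=2$ the first moment already suffices: expanding $\sum_g\pi(g+\theta(g))$ in two ways gives $m\sum_t t=2m\sum_t t$ in $\Z_{2^k}$, hence $m\sum_{t=0}^{2^k-1}t=0$, which is false because $\sum_t t=2^{k-1}(2^k-1)$ has $2$-adic valuation $k-1<k$ while $m$ is odd. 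For $p=3$ one needs the second moment: writing $P=\sum_g\pi(g)\pi(\theta(g))$ and $T_0=\sum_{t=0}^{3^k-1}t^2$, expansion of $\sum_g\pi(g+\theta(g))^2$ and of $\sum_g\pi(\theta(g)-g)^2$, after cancelling the common term $\sum_g\pi(g)^2=\sum_g\pi(\theta(g))^2=mT_0$, gives $2P=-mT_0$ and $2P=mT_0$ respectively, so $2mT_0=0$ in $\Z_{3^k}$; since $2$ and $m$ are units this forces $T_0\equiv 0$, contradicting that $T_0=\tfrac{(3^k-1)3^k(2\cdot 3^k-1)}{6}$ has $3$-adic valuation exactly $k-1$. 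Thus a strong complete mapping forces both Sylow subgroups to be trivial or non-cyclic.

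For sufficiency, assume the Sylow $2$- and $3$-subgroups are trivial or non-cyclic. By Theorem~\ref{thm:direct} it is enough to produce a strong complete mapping on each $S_p$. When $p\ge 5$ the scalar map $\theta(x)=2x$ works on any abelian $p$-group, since $2$, $3$, $1$ are all units modulo $p^a$, so $\theta$, $x\mapsto x+\theta(x)=3x$, and $x\mapsto\theta(x)-x=x$ are automorphisms. When $p\in\{2,3\}$ and $S_p$ is homogeneous, say $S_p\cong(\Z_{p^a})^r$ with $r\ge 2$, I would take $\theta$ to be the companion matrix of a degree-$r$ polynomial over $\F_p$ with no root in $\{0,1,-1\}$ (for example $x^r+x+1$ when $p=2$, or a product of irreducible quadratics and cubics when $p=3$); then $\theta$, $\theta-I$, $\theta+I$ all reduce mod $p$ to invertible matrices and hence are automorphisms of $S_p$. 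Here the rank $r\ge 2$ is exactly what creates the room to avoid the forbidden eigenvalues $0,\pm1$, which for $p=2,3$ exhaust $\F_p$ and so cannot be avoided in rank one, mirroring the non-cyclicity hypothesis.

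The main obstacle is the remaining case of a \emph{non-homogeneous} abelian $2$- or $3$-group in which some cyclic factor occurs with multiplicity one: there the reduction mod $p$ of every endomorphism is block upper-triangular with a $1\times 1$ diagonal block for that factor, whose single eigenvalue lies in $\F_p=\{0,\pm1\}$, so no linear map can make $\theta$ and $\theta-I$ simultaneously invertible (for instance $\Z_2\times\Z_4$ has no linear strong complete mapping). To finish I would group the cyclic factors of $S_p$ by order and apply the homogeneous construction blockwise through Theorem~\ref{thm:direct} wherever every order has multiplicity at least two, and for the offending multiplicity-one factors reduce, again via Theorem~\ref{thm:direct}, to exhibiting explicit (necessarily non-linear) strong complete mappings on the small building blocks $\Z_{p^a}\times\Z_{p^b}$ together with one rank-three block to absorb an odd number of factors. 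Producing these non-linear maps — or invoking the known finite classification that underlies the statement — is where the real effort lies, the moment computation and the linear theory having already settled necessity and the generic part of sufficiency.
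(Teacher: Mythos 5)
First, note that the paper does not prove this theorem at all: its ``proof'' is a citation to Theorem~4 of~\cite{Evans12}, so any comparison is really with the literature behind that citation. Your necessity argument is correct and complete: the first-moment computation through the projection onto a cyclic Sylow $2$-subgroup, and the second-moment computation through the projection onto a cyclic Sylow $3$-subgroup (using that $v_p$ of $\sum t$ and of $\sum t^2$ is exactly $k-1$), are valid and pleasantly elementary. Your sufficiency argument is also correct as far as it goes: $x\mapsto 2x$ handles Sylow $p$-subgroups for $p\ge 5$, and the companion-matrix construction handles homogeneous groups $(\Z_{p^a})^r$, $r\ge 2$, for $p\in\{2,3\}$, since invertibility of $M$, $M-I$, $M+I$ over $\Z_{p^a}$ reduces to $f(0),f(1),f(-1)\ne 0$ in $\F_p$.

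The genuine gap is the one you yourself flag: sufficiency is not established for non-cyclic abelian $2$- and $3$-groups that are not homogeneous, e.g.\ $\Z_2\times\Z_4$ or $\Z_3\times\Z_9$. Your blockwise reduction via Theorem~\ref{thm:direct} is structurally sound, but it bottoms out at building blocks $\Z_{p^a}\times\Z_{p^b}$ (and a rank-three block when the number of invariant factors is odd) with unequal exponents, and for these no strong complete mapping is exhibited. These cases cannot be dispatched by your linear machinery (as you correctly observe, every endomorphism of $\Z_2\times\Z_4$ reduces mod~$2$ to a triangular matrix with a diagonal entry in $\F_2=\{0,1\}$, so $\theta$ and $\theta-I$ cannot both be invertible), nor by Theorem~\ref{thm:G/H} (every subgroup--quotient factorization of $\Z_2\times\Z_4$ has a nontrivial cyclic $2$-group on one side). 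Explicit non-linear constructions --- which is precisely what~\cite{Evans:1990} and~\cite{Horton:1990} supply and what~\cite{Evans12} relies on --- are therefore indispensable here, and ``producing these maps or invoking the known classification'' is not a detail but the core of the sufficiency direction. Until those base cases are written down, the proof is incomplete.
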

\begin{proof}
See Theorem~4 in~\cite{Evans12}.
\end{proof}

An important component in the proof of Theorem~\ref{thm:finite2gps} is a quotient group construction of strong complete mappings: such a construction for finite abelian groups is given in~\cite{Evans:1990} and~\cite{Horton:1990}. The construction used works in the infinite case as well.

\begin{thm}\label{thm:G/H}
Let $H$ be a subgroup of an abelian group $G$. If both $H$ and $G/H$ have a strong complete mapping, then so does $G$.
\end{thm}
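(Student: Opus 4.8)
The plan is to reproduce verbatim the explicit quotient construction used for finite abelian groups, checking that no step relies on finiteness. Write $G$ multiplicatively (using commutativity freely) and let $\pi\colon G \to G/H$ be the quotient map. Fix a strong complete mapping $\phi$ of $G/H$ and a strong complete mapping $\psi$ of $H$. Using the axiom of choice, choose a section $s\colon G/H \to G$ of $\pi$, that is, a transversal with $\pi(s(q)) = q$ for every coset $q$. Each $g \in G$ then has a unique expression $g = s(q)h$ with $q = \pi(g)$ and $h = s(q)^{-1}g \in H$, and I would define $\theta\colon G \to G$ by
\[ \theta(g) = s(\phi(q))\,\psi(h). \]

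First I would verify that $\theta$ is a bijection. Since $\psi$ is a bijection of $H$, the map $\theta$ sends the coset $q = s(q)H$ onto $s(\phi(q))\psi(H) = s(\phi(q))H$, which is the coset $\phi(q)$, and does so bijectively. As $\phi$ permutes $G/H$ and the cosets partition $G$, these images are pairwise disjoint and exhaust $G$, so $\theta$ is a bijection irrespective of the finer properties of $\phi$ and $\psi$.

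The two substantive checks are the orthomorphism and complete-mapping conditions, and the key move is to use commutativity to \emph{decouple} the coset coordinate from the $H$-coordinate. For the orthomorphism condition one computes
\[ g^{-1}\theta(g) = \bigl(s(q)^{-1}s(\phi(q))\bigr)\bigl(h^{-1}\psi(h)\bigr), \]
where the first factor lies in the coset $q^{-1}\phi(q)$ and the second lies in $H$. As $g$ ranges over $G$ the pair $(q,h)$ ranges over $(G/H)\times H$; the coset of $g^{-1}\theta(g)$ is precisely $q^{-1}\phi(q)$, which runs over each coset of $G/H$ exactly once because $\phi$ is an orthomorphism of $G/H$. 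Fixing $q$ and letting $h$ range over $H$, the factor $h^{-1}\psi(h)$ runs over all of $H$ exactly once because $\psi$ is an orthomorphism of $H$, so $g \mapsto g^{-1}\theta(g)$ is a bijection of $G$. The complete-mapping condition is handled identically via the factorization $g\theta(g) = \bigl(s(q)s(\phi(q))\bigr)\bigl(h\psi(h)\bigr)$: the coset $q\phi(q)$ runs over $G/H$ once since $\phi$ is a complete mapping, and $h\psi(h)$ runs over $H$ once since $\psi$ is a complete mapping.

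I expect the only real obstacle to be the bookkeeping in these two factorizations---specifically, confirming that the $H$-valued factor depends only on $h$ while the coset-representative factor depends only on $q$, which is exactly where abelianness of $G$ is used. Notably, no transfinite induction is required: the map $\theta$ is given by a single explicit formula and cardinality plays no role, so the finite argument transfers without change. The one point I would state carefully is the appeal to the axiom of choice in selecting the transversal $s$, which is in keeping with the set-theoretic conventions adopted throughout the paper.
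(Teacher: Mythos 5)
Your proposal is correct and is essentially the paper's own proof: the paper likewise fixes a system of coset representatives and defines the map by sending $g_i + h$ to $\alpha\phi(g_i+H) + \theta(h)$, leaving the verification as routine, which you have simply written out multiplicatively and in full. No gap.
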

\begin{proof}
Let $\theta$ be a strong complete mapping of $H$, let $\phi$ be a strong complete mapping of $G/H$, and let $D=\{g_i : i\in I\}$ be a system of distinct coset representatives for $H$ in $G$. Define $\alpha\colon G/H\to D$ by $\alpha(g_i+H)=g_i$, and define $\beta\colon G\to G$ by 
\[
\beta(g_i+h)=\alpha\phi(g_i+H)+\theta(h).
\]
It is routine to show that $\beta$ is a strong complete mapping of $G$.
\end{proof}

It should be noted that no proof of Theorem~\ref{thm:G/H} has been given for non-abelian groups, even in the finite case.

Having answered the question for finite abelian groups, let us turn to infinite abelian groups. The structure of infinite abelian groups is studied in~\cite{Fuchs:2015},~\cite{Kaplansky:1969}, and Chapter 10 of~\cite{Rotman:1995}. Let $A$ be an abelian group written additively and let us define the {\em restricted $2$-subgroup} of $A$ to be the subgroup $E_2$ consisting of the identity and involutions that are not squares. 

\begin{thm}\label{thm:InftyAbSCM}
Let $A$ be an infinite abelian group with restricted $2$-subgroup $E_2$. Then $A$ has a strong complete mapping except, possibly, when $A/E_2$ is finite and has a non-trivial, cyclic Sylow $3$-subgroup.
\end{thm}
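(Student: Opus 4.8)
The plan is to peel off the non-square involutions as a direct summand and then feed the two pieces into the results already available (Theorems~\ref{thm:squarefulSCM}, \ref{thm:direct}, \ref{thm:finite2gps} and the fact \cite{Bateman50} that every infinite group has an orthomorphism). Write $A$ additively. The $2$-torsion $A[2]$ is an $\F_2$-vector space and $A[2]\cap 2A$ is a subspace, so I would take $E_2$ to be a vector-space complement of $A[2]\cap 2A$ inside $A[2]$; this is a genuine subgroup (the naive set of all non-square involutions need not be closed, which is why a complement is used) of exponent~$2$ with $E_2\cap 2A=\{0\}$, whose nonzero elements are exactly non-square involutions. Having exponent $2$ makes $E_2$ bounded, and $E_2\cap 2A=\{0\}$ makes it pure (the only nontrivial case is $n$ even, where purity reduces to this), so by the structure theory of abelian groups \cite{Fuchs:2015,Rotman:1995} it is a direct summand: $A=E_2\oplus B$ with $B\cong A/E_2$. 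Two facts drive the rest. First, a cardinality count shows $A$ is squareful exactly when $E_2$ is finite: if $A$ is not squareful then $|2A|<|A|=\kappa$ forces $|A[2]|=\kappa$ while $|A[2]\cap 2A|\le|2A|<\kappa$, so $|E_2|=\kappa$. Second, $A/E_2$ has every involution among its squares: if $\overline a=a+E_2$ is an involution then $2a\in E_2\cap 2A=\{0\}$, so $a\in A[2]=E_2\oplus(A[2]\cap 2A)$ and hence $\overline a\in 2(A/E_2)$; the same therefore holds for $B\cong A/E_2$.

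If $A$ is squareful, Theorem~\ref{thm:squarefulSCM} hands us a strong complete mapping directly and there is nothing more to do. So assume $A$ is not squareful; by the count above $E_2$ is an infinite elementary abelian $2$-group. Such a group has a strong complete mapping: it is not squareful, but it has an orthomorphism $\theta$ by \cite{Bateman50}, and since every element is its own inverse the orthomorphism condition $g\mapsto g^{-1}\theta(g)$ and the complete-mapping condition $g\mapsto g\theta(g)$ are literally the same map, so $\theta$ is simultaneously both.

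It remains to produce a strong complete mapping on $B\cong A/E_2$ and combine. If $B$ is infinite then, having all its involutions among its squares, it is squareful, so Theorem~\ref{thm:squarefulSCM} applies to $B$ and Theorem~\ref{thm:direct} assembles a strong complete mapping for $A=E_2\oplus B$. If $B$ is finite I would exploit the infinite supply of involutions in $E_2$: split off two of them, writing $E_2=(\Z/2)^2\oplus E_2'$ with $E_2'$ again an infinite elementary abelian $2$-group, and regroup as $A=\bigl[(\Z/2)^2\oplus B\bigr]\oplus E_2'$. By construction the finite factor $(\Z/2)^2\oplus B$ has non-cyclic Sylow $2$-subgroup and its Sylow $3$-subgroup equals that of $A/E_2$, so by Theorem~\ref{thm:finite2gps} it has a strong complete mapping precisely when the Sylow $3$-subgroup of $A/E_2$ is trivial or non-cyclic. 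As $E_2'$ also has one, Theorem~\ref{thm:direct} then gives a strong complete mapping of $A$ in every case except when $A/E_2$ is finite with non-trivial cyclic Sylow $3$-subgroup, which is exactly the stated exceptional possibility.

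The step I expect to be the crux is the structural one: checking that $E_2$ may be taken to be a subgroup that is a direct summand of $A$ (boundedness plus purity, via the summand theorem for bounded pure subgroups) and that $A/E_2$ has no non-square involutions. Once that is in place, the only genuinely new idea is the regrouping trick of trading two copies of $\Z/2$ out of the infinite summand $E_2$ to force the finite quotient's Sylow $2$-subgroup to be non-cyclic; this is what removes the Sylow $2$ clause that appears in the finite classification (Theorem~\ref{thm:finite2gps}) and leaves only the Sylow $3$ condition. The remaining ingredients---the squarefulness cardinality dichotomy, the identification of orthomorphisms with strong complete mappings on exponent-$2$ groups, and the bookkeeping for Theorem~\ref{thm:direct}---are routine.
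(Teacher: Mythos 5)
Your proof is correct, but it takes a genuinely different route from the paper's. The paper never decomposes $A$ as a direct sum: it relies entirely on the quotient-extension result (Theorem~\ref{thm:G/H}: if $H$ and $G/H$ have strong complete mappings then so does $G$), applied to $E_2\leq A$ directly; it produces a strong complete mapping of the infinite $E_2$ by viewing it as the additive group of a field of characteristic~$2$ and taking $x\mapsto ax$ with $a\neq 0,1$; and it removes the cyclic-Sylow-$2$ obstruction by working inside the Sylow $2$-subgroup $S$ of $A$ (taking $H=\langle g,h\rangle$ with $g\in S$ of maximal order and $h\in E_2$, so that $H$ and $S/H\cong E_2$ both have strong complete mappings, and applying Theorem~\ref{thm:G/H} again). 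You instead split $E_2$ off as a direct summand---which costs you the extra structural input that a bounded pure subgroup is a summand (Pr\"ufer/Kulikov; see \cite{Fuchs:2015,Kaplansky:1969})---and then need only Theorem~\ref{thm:direct}, disposing of the Sylow~$2$ clause of Theorem~\ref{thm:finite2gps} by the regrouping $A=\bigl[(\Z/2\Z)^2\oplus B\bigr]\oplus E_2'$. What your route buys is twofold. First, you correctly note that the set of non-square involutions together with the identity need not be closed under addition (in $\Z/4\Z\oplus(\Z/2\Z)^{(\omega)}$ the non-square involutions $(0,e_1)$ and $(2,e_1)$ sum to the square involution $(2,0)$), so taking $E_2$ to be a complement of $A[2]\cap 2A$ in $A[2]$ actually repairs the paper's definition of the restricted $2$-subgroup, on which both the statement and the paper's own proof silently depend. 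Second, deriving the strong complete mapping of $E_2$ from Bateman's orthomorphism \cite{Bateman50} via the observation that orthomorphisms and complete mappings coincide on exponent-$2$ groups is a clean alternative to the field-multiplication construction. The supporting cardinality facts you use ($A$ non-squareful forces $|E_2|=|A|$; an infinite group all of whose involutions are squares is squareful) are exactly what the paper also needs, and your case analysis lands on precisely the stated exceptional case.
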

\begin{proof}
If $A$ is squareful, then it has a strong complete mapping by Theorem~\ref{thm:squarefulSCM}. Thus, we may assume $A$ not to be squareful, and hence that $E_2$ is infinite. 

We may think of $E_2$ as the additive group of a field $K$. Let $a\in K\setminus\{0,1\}$: $E_2$ has a strong complete mapping, as the mapping $x\mapsto ax$ is a strong complete mapping of $E_2$.

If $A/E_2$ is infinite, then $A/E_2$ is squareful. By Theorem~\ref{thm:G/H}, it follows that, if $A/E_2$ is infinite, then $A$ has a strong complete mapping.

If $A/E_2$ is finite, then, by Theorem~\ref{thm:finite2gps}, $A/E_2$ has a strong complete mapping if the Sylow $2$-subgroup of $A/E_2$ is trivial or non-cyclic, and the Sylow $3$-subgroup of $A/E_2$ is trivial or non-cyclic. Let $S$ be the Sylow $2$-subgroup of $A$. If $S/E_2$ is non-trivial and cyclic, let $g\in S$ be of maximum possible order and let $h\in E_2$. The subgroup $H=\langle g,h\rangle$ has a strong complete mapping by Theorem~\ref{thm:finite2gps}, and $S/H\cong E_2$. Thus $S$ has a strong complete mapping by Theorem~\ref{thm:G/H}. Hence, if the Sylow $3$-subgroup of $A/E_2$ is trivial or non-cyclic, then $A/E_2$ has a strong complete mapping, and hence $A$ has a strong complete mapping.
\end{proof}

In the finite case, strong complete mappings can be used to construct Knut Vic designs and to construct Latin squares orthogonal to both the Cayley table of a group and the normal multiplication table of the group. For a Latin square of order $n$; the {\em $k$th left diagonal} consists of the cells $(i,k+i)$, $i=0,\dots,n-1$, addition modulo $n$; and the {\em $k$th right diagonal} consists of the cells $(i,k-i)$, $i=0,\dots,n-1$, subtraction modulo $n$. A {\em Knut Vic design} of order $n$ is a Latin square of order $n$ in which the entries on each left diagonal, and the entries on each right diagonal form a permutation of the symbol set. In~\cite{Hedayat:1977} and~\cite{Hedayat/Federer:1975} it is shown that a Knut Vic design of order $n$ exists if and only if $\gcd(n,6)=1$. 

For a Latin square $L$ of infinite order~$\kappa$ we may assume that the rows and columns of $L$ are indexed by an abelian group~$A$ of order~$\kappa$.  For~$k \in A$, the {\em $k$th left diagonal} of $L$ consists of the cells $(i,k+i)$, $i\in A$; and the {\em $k$th right diagonal} of $L$ consists of the cells $(i,k-i)$, $i\in A$.  When~$A$ is $\Z$ or~$\R$, these definitions naturally extend the geometric sense of the finite case to the plane.   We say that a Latin square $L$ is a {\em Knut Vic design} if each left diagonal, and each right diagonal contains each symbol exactly once.

\begin{thm}\label{th:infKV}
There exists a Knut Vic design of every infinite order~$\kappa$.
\end{thm}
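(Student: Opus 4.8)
The plan is to reduce the construction of an infinite Knut Vic design to the existence of a strong complete mapping, mirroring the finite connection mentioned just above, and then to invoke Theorem~\ref{thm:squarefulSCM} to supply such a mapping at every infinite cardinality.

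First I would fix, for the given infinite cardinal $\kappa$, a squareful abelian group $A$ of order $\kappa$ to index the rows and columns (recall that by convention a Knut Vic design of order~$\kappa$ is taken with respect to some abelian indexing group of that order, and we are free to choose it). A convenient choice is the free abelian group of rank $\kappa$, that is $\bigoplus_{\alpha<\kappa}\mathbb{Z}$: it has cardinality $\kappa$ and, being torsion-free, its doubling map $g\mapsto 2g$ is injective, so $|\{2g : g\in A\}|=\kappa$ and $A$ is squareful. By Theorem~\ref{thm:squarefulSCM}, $A$ then carries a strong complete mapping $\theta$; writing $A$ additively this means that $\theta$ is a bijection for which both $g\mapsto g+\theta(g)$ and $g\mapsto \theta(g)-g$ are bijections of $A$.

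The heart of the argument is to define the square $L\colon A\times A\to A$ (with symbol set $A$) by $L(i,j)=i+\theta(j)$ and to verify that the two diagonal families are transversals. Rows and columns are immediate: for fixed $i$ the map $j\mapsto i+\theta(j)$ is a bijection since $\theta$ is, and for fixed $j$ the map $i\mapsto i+\theta(j)$ is a translation. For the $k$th left diagonal I would read off the entry in cell $(i,k+i)$, namely $i+\theta(k+i)$, and reindex by $m=k+i$ to rewrite it as $(m+\theta(m))-k$; since $\theta$ is a complete mapping, $m\mapsto m+\theta(m)$ is a bijection, so each symbol occurs exactly once along that diagonal. Dually, the entry of the $k$th right diagonal in cell $(i,k-i)$ is $i+\theta(k-i)$, which under $m=k-i$ becomes $(\theta(m)-m)+k$; the orthomorphism property of $\theta$ makes $m\mapsto\theta(m)-m$ a bijection, giving the transversal condition. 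Combining these four checks shows that $L$ is a Knut Vic design of order $\kappa$.

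I do not expect a genuine obstacle here. The only real point is to recognize that a strong complete mapping encodes \emph{exactly} the two broken-diagonal transversal conditions---the complete-mapping property controlling the left diagonals and the orthomorphism property the right diagonals---after which the verification is just a pair of change-of-variable computations. The finite divisibility restriction $\gcd(n,6)=1$ disappears because at infinite cardinality a squareful abelian indexing group is always available, so Theorem~\ref{thm:squarefulSCM} applies unconditionally. As a self-contained alternative I could instead take $A=(K,+)$ for a field $K$ of cardinality $\kappa$ with $\operatorname{char}K\notin\{2,3\}$ (for instance a purely transcendental extension of $\Q$ of transcendence degree $\kappa$) and set $L(i,j)=i+2j$; the identical four checks go through because $2$, $3$, and $-1$ are all invertible in $K$.
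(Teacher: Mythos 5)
Your proposal is correct and follows exactly the paper's route: choose a squareful abelian group $A$ of order $\kappa$, take a strong complete mapping $\theta$ from Theorem~\ref{thm:squarefulSCM}, and set $L(i,j)=i+\theta(j)$; your change-of-variable checks are precisely the verification the paper labels ``routine.'' The only addition is your explicit witness $\bigoplus_{\alpha<\kappa}\mathbb{Z}$ for the existence of a squareful abelian group of each infinite order, which the paper assumes without comment.
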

\begin{proof}
Let~$A$ be a squareful abelian group of order~$\kappa$ and let $\theta$ be a strong complete mapping of $A$: this exists by Theorem~\ref{thm:squarefulSCM}. Let $L$ be the Latin square, with rows and columns indexed by $A$, with symbol set $A$, and with $ij$th entry $i+\theta(j)$. It is routine to show that $L$ is a Knut Vic design.
\end{proof}

Choosing~$A \in \{ \Z, \R \}$ in the proof of Theorem~\ref{th:infKV} gives a Knut Vic design embedded in the plane with a natural geometric interpretation.

Recall that, for a group $G=\{g_i : i\in I\}$, the {\em Cayley table} of $G$ is the Latin square with $ij$th entry $g_ig_j$. The {\em normal multiplication table} of $G$ is the Latin square with $ij$th entry $g_ig_j^{-1}$.

\begin{thm}
If a group $G$ has a strong complete mapping, then there exists a Latin square orthogonal to both the Cayley table of $G$ and the normal multiplication table of $G$.
\end{thm}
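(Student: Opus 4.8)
The plan is to reuse the single square $L_\theta$ introduced in Section~\ref{sec:orth} and to show that the two defining properties of a strong complete mapping deliver the two orthogonalities, one each. Write $G = \{ g_i : i \in I \}$, let $L$ be the Cayley table (with $ij$th entry $g_i g_j$), let $M$ be the normal multiplication table (with $ij$th entry $g_i g_j^{-1}$), and let $\theta$ be the strong complete mapping supplied by the hypothesis. Recall that $L_\theta$ has $ij$th entry $g_i\theta(g_j)$. Since a strong complete mapping is in particular an orthomorphism, the computation already recorded in Section~\ref{sec:orth} shows $L_\theta$ is orthogonal to $L$, so the only thing left to verify is that $L_\theta$ is orthogonal to $M$.

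For that, I would check directly that the map sending a position $(i,j)$ to the ordered pair of entries it receives in $L_\theta$ and in $M$ is a bijection onto $G \times G$. Fixing a target pair $(x,y) \in G \times G$, I want a unique position $(i,j)$ with $g_i \theta(g_j) = x$ and $g_i g_j^{-1} = y$. The second equation forces $g_i = y g_j$; substituting into the first gives $g_j \theta(g_j) = y^{-1} x$. Because $\theta$ is a complete mapping, the map $g \mapsto g\theta(g)$ is a bijection, so there is exactly one $g_j$ satisfying this, and then $g_i = y g_j$ is determined. Hence each pair arises from exactly one position, which is precisely the statement that $L_\theta$ and $M$ are orthogonal.

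I expect no genuine obstacle here: the argument is a mirror image of the orthomorphism-to-Cayley-table verification, with the complete-mapping property playing the role that the orthomorphism property plays there. The only points demanding care are bookkeeping ones---pairing the right property of $\theta$ with the right table (the orthomorphism condition with the Cayley table, the complete-mapping condition with the normal multiplication table) and respecting the order of the group products throughout, since $G$ need not be abelian. No cardinality considerations enter the verification, so the same square $L_\theta$ serves at every order.
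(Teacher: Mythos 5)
Your proposal is correct and uses exactly the square the paper uses, namely $L_\theta$ with $ij$th entry $g_i\theta(g_j)$; the paper simply declares the two orthogonality checks ``routine,'' and your verification (orthomorphism property against the Cayley table, complete-mapping property against the normal multiplication table) is precisely the routine computation intended.
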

\begin{proof}
Let $\theta$ be a strong complete mapping of $G$. Let $L$ be the Latin square with $ij$th entry $g_i\theta(g_j)$. It is routine to show that $L$ is orthogonal to both the Cayley table of $G$ and the normal multiplication table of $G$.
\end{proof}

\section*{Acknowledgements}

This work was partly funded by a Marlboro College Faculty Professional Development Grant and a Marlboro College Town Meeting Scholarship Fund award.  The authors are grateful for this assistance.

\end{document}